\newcommand\rank{\operatorname{rank}}
\newcommand\R{{\mathbb{R}}}
\newcommand\C{{\mathbb{C}}}
\renewcommand\P{\mathbf{P}}
\newcommand\E{{\mathbf{E}}}
\newcommand\eps{{\varepsilon}}
\newcommand{\cE}{\mathcal{E}}
\newcommand{\toprobab}{\overset{P}{\underset{n\to\infty}\longrightarrow}}
\newcommand{\toas}{\overset{a.s.}{\underset{n\to\infty}\longrightarrow}}
\newcommand{\cA}{\mathcal{A}}
\renewcommand\Re{\operatorname{Re}}
\renewcommand\Im{\operatorname{Im}}
\def\rank{\text{rank}}
\theoremstyle{plain}
\newtheorem{theorem}{Theorem}[section]
\newtheorem{conjecture}[theorem]{Conjecture}
\newtheorem{fact}[theorem]{Fact}
\newtheorem{lemma}[theorem]{Lemma}
\newtheorem{claim}[theorem]{Claim}
\newtheorem*{claim*}{Claim}
\theoremstyle{remark}
\newtheorem{remark}[theorem]{Remark}
\theoremstyle{definition}
\begin{document}

\title[Zeros of a growing number of derivatives of random polynomials with independent roots]{Zeros of a growing number of derivatives of random polynomials with independent roots}

\author{Marcus Michelen}
\address{Department of Mathematics, Statistics and Computer Science,  University of Illinois, Chicago, USA} 
\email{michelen@uic.edu, michelen.math@gmail.com}

\author{Xuan-Truong Vu}
\address{Department of Mathematics, Statistics and Computer Science,  University of Illinois, Chicago, USA}
\email{tvu25@uic.edu, truongvu.math@gmail.com}

\begin{abstract}
 Let $X_1,X_2,\ldots$ be independent and identically distributed random variables in $\C$ chosen from a probability measure $\mu$ and define the random polynomial
 \begin{align*}
 		P_n(z)=(z-X_1)\ldots(z-X_n)\,.
 \end{align*}
We show that for any sequence $k = k(n)$ satisfying $k \leq \log n / (5 \log\log n)$, the zeros of the $k$th derivative of $P_n$ are asymptotically distributed according to the same measure $\mu$.  This extends work of Kabluchko, which proved the $k = 1$ case, as well as Byun, Lee and Reddy who proved the fixed $k$ case.  
\end{abstract}

\maketitle
\section{Introduction} 

Let $\mu$ be a probability measure on $\C$. 
Suppose that $X_1,X_2,\ldots$ are i.i.d.\ random variables with values in $\C$ sampled from $\mu$, and for each $n$ define the random polynomial
\begin{equation*}
	P_n(z)=(z-X_1)\ldots(z-X_n)\,. 
\end{equation*}
By the law of large numbers if we consider the \emph{empirical measure} of $P_n$, where we a put a point mass of $1/n$ at each root of $P_n$, then we see that the empirical measure converges to $\mu$ as $n$ approaches infinity.     Pemantle and Rivin \cite{pemantle_rivin}  conjectured that the same holds for the derivative $P_n$.  To make this precise, we define 	$\mu_n^{(1)} $ to be the probability measure on $\C$ that puts a point mass at each critical point of $P_n$:
\begin{equation*}
	\mu_n^{(1)} := \frac {1}{n-1}\sum_{z\in \C: P_n^{'}(z)=0} \delta_z.
\end{equation*}
Pemantle and Rivin conjectured that $\mu_n^{(1)} \to \mu$ in distribution as $n\to \infty$ and proved their conjecture under the assumption that $\mu$ has finite $1$-energy. Subramanian \cite{subramanian} proved the Pemantle-Rivin conjecture in the special case when $\mu$ is supported on the unit circle.  In an influential work, Kabluchko  \cite{Kabluchko2015} confirmed Pemantle and Rivin's conjecture for \emph{all} probability measures $\mu$.  Since then, attention has been focused on higher derivatives.  To this end, for each $k$ define the (random) probability measure  $\mu_n^{(k)}$ via

\begin{equation*}
	\mu_n^{(k)} := \frac 1{n-k}\sum_{z\in \C: P_n^{(k)}(z)=0} \delta_z\,.
\end{equation*}

Byun, Lee and Reddy \cite{byun-lee-reddy} extended Kabluchko's work and showed that for each fixed $k$,\footnote{The work \cite{byun-lee-reddy} does not state an explicit rate at which $k$ can be taken to grow.  An inspection of the proof shows that it depends on the Levy concentration of the non-atomic part of the random variable $(z - X)^{-1}$ as $z$ varies in $\C$ (see \cite[eq.\ (4.6)]{byun-lee-reddy}).}  we have $\mu_n^{(k)} \to \mu$.  
Looking towards very high derivatives, O'Rourke and Steinerberger \cite{o2021nonlocal} conjecture that for each fixed $t \in [0,1]$, one has that the random measure $\mu_n^{\lfloor t n\rfloor}$ converges to a deterministic measure $\mu_t$.  In the case when the underlying measure $\mu$ is radial, O'Rourke and Steinerberger conjecture that the logarithmic potentials of the limiting measure $(\mu_t)_t$ satisfy a certain partial differential equation.  This partial differential equation has been studied by analysts \cite{alazard2022dynamics,kiselev2022flow}, and O'Rourke and Steinerberger's conjecture was proven in the special case when $\mu$ has real support \cite{hoskins2022semicircle} (see also \cite{hoskins2021dynamics,kabluchko2021repeated}).   In the $t = 0$ case, the prediction of O'Rourke and Steinerberger suggests that the limiting measure should be the same as the underlying measure $\mu$.  

We confirm this in the case that $k$ grows slightly slower than logarithmically:
\begin{theorem}\label{thm: maintheorem}
	Let $\mu$ be a probability measure on $\C$ and $k = k(n)$ be a sequence satisfying $k \leq \frac{\log n}{5 \log \log n}$. Then the sequence $\mu_n^{(k)}\to \mu$ in probability as $n\to\infty$.
\end{theorem}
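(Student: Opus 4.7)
The plan is to work within the potential-theoretic framework employed by Kabluchko \cite{Kabluchko2015} and Byun-Lee-Reddy \cite{byun-lee-reddy}. For any probability measure $\nu$ on $\C$ set $U_\nu(z) := \int\log|z-w|\,d\nu(w)$, so that $U_{\mu_n^{(k)}}(z) = \frac{1}{n-k}\log|P_n^{(k)}(z)|$. A standard fact from potential theory (combined with a straightforward tightness argument) reduces the weak convergence $\mu_n^{(k)} \to \mu$ in probability to pointwise convergence $U_{\mu_n^{(k)}}(z) \to U_\mu(z)$ in probability for Lebesgue-a.e.\ $z \in \C$. Setting $W_i = W_i(z) := (z-X_i)^{-1}$, the key algebraic identity
\begin{equation*}
P_n^{(k)}(z) = k! \cdot P_n(z) \cdot e_k(W_1, \ldots, W_n)
\end{equation*}
($e_k$ the $k$th elementary symmetric polynomial) produces
\begin{equation*}
U_{\mu_n^{(k)}}(z) = \frac{n}{n-k}\, U_{\mu_n^{(0)}}(z) + \frac{\log k!}{n-k} + \frac{1}{n-k}\log|e_k(W_1, \ldots, W_n)|.
\end{equation*}
The first summand tends to $U_\mu(z)$ in probability for a.e.\ $z$ by the law of large numbers applied to $\log|z-X_i|$ together with $k/n \to 0$, and the second is $O(k\log k/n) = o(1)$ by hypothesis. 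The theorem reduces to showing $\frac{1}{n-k}\log|e_k(W_1,\ldots,W_n)| \to 0$ in probability for a.e.\ $z$.

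The upper bound follows from Cauchy's integral formula applied to the generating polynomial $w \mapsto \prod_j(1+wW_j)$: one obtains $|e_k(W_1,\ldots,W_n)| \leq \rho^{-k}\prod_j(1+\rho|W_j|)$ for any $\rho > 0$. Choosing $\rho$ of order $k/n$ and invoking the law of large numbers for $\sum_j \log(1+\rho|W_j|)$ (valid because $\log(1+\rho/|z-X|)$ is $\mu$-integrable for a.e.\ $z$) yields $\log|e_k| \leq O(k\log(n/k))$ with high probability, which is $o(n)$ under $k \leq \log n/(5\log\log n)$.

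The lower bound $\log|e_k| \geq -o(n)$ is the heart of the argument. The first moment is $\E[e_k(W_1,\ldots,W_n)] = \binom{n}{k}\,m(z)^k$ with Cauchy transform $m(z) := \int(z-w)^{-1}\,d\mu(w)$; for a.e.\ $z$ this is finite and nonzero, so $|\E e_k|$ has magnitude $\exp(k\log(n/k) + O(k)) = \exp(o(n))$. To concentrate $|e_k|$ near $|\E e_k|$ we introduce a truncation $\widetilde{W}_i := W_i\,\ind_{|W_i|\leq T}$ at a level $T = T(n)$ chosen so that (with high probability) the number of bad indices $\{i : |W_i|>T\}$ is $o(n)$. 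A second-moment computation on the truncated polynomial $\widetilde{e}_k$ gives
\begin{equation*}
\E|\widetilde{e}_k|^2 = \sum_{t=0}^k \binom{n}{k}\binom{k}{t}\binom{n-k}{k-t}\,\alpha^t\,|\widetilde{m}|^{2(k-t)},
\end{equation*}
with $\alpha := \E|\widetilde{W}|^2$ and $\widetilde{m} := \E\widetilde{W}$, and one verifies that $\Var(\widetilde{e}_k) \ll |\E\widetilde{e}_k|^2$. Chebyshev's inequality then delivers $|\widetilde{e}_k| \geq \tfrac{1}{2}|\E\widetilde{e}_k|$ with high probability, and a short argument transferring this bound to $e_k$ (using the scarcity of bad indices) completes the lower bound. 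The exceptional set $\{z : m(z) = 0\}$ is handled via a direct anti-concentration estimate on the degree-$k$ chaos.

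The main obstacle is the delicate interplay between the truncation level, the local regularity of $\mu$ at $z$, and the growth rate of $k$: when $\mu$ has dimension close to $2$ at $z$ the natural truncation $T \gg \sqrt{n}$ inflates $\alpha$ logarithmically, and the second-moment estimate is propagated through a Vandermonde-type sum whose $t=0$ term must remain dominant uniformly in $k$. The constraint $k \leq \log n/(5\log\log n)$, with the specific constant $5$, arises from balancing these truncation errors and the magnitude of $|\E e_k|$ against the available quantitative anti-concentration of $(z-X)^{-1}$ at the level of order-$k$ symmetric functions, which is the same issue flagged in the footnote to \cite{byun-lee-reddy}. Sharpening this anti-concentration input would correspondingly relax both the constant $5$ and, more substantively, the logarithmic-over-loglog ceiling on the growth of $k$.
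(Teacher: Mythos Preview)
Your reduction and upper-bound steps follow the same template as the paper, but the lower bound---which is the crux---has a genuine gap. Your second-moment argument hinges on $m(z)=\int(z-w)^{-1}d\mu(w)\neq 0$, so that $|\E\tilde e_k|=\binom{n}{k}|\tilde m|^k$ is large enough to beat the variance. You treat $\{z:m(z)=0\}$ as an exceptional null set, but it need not be: for any radially symmetric $\mu$ supported in an annulus $\{r_1\le|w|\le r_2\}$ one has $m(z)\equiv 0$ on the entire disk $\{|z|<r_1\}$, a set of positive Lebesgue measure. On that set your Chebyshev step gives nothing, and the sentence ``handled via a direct anti-concentration estimate on the degree-$k$ chaos'' is exactly the difficulty, not a side remark. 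The paper resolves this uniformly in $z$ (in particular with no hypothesis on $\E Y_1$) by reducing to the Meka--Nguyen--Vu anti-concentration theorem for multilinear polynomials of Bernoulli variables; the exponent $1/(4d+1)$ in that theorem is precisely what produces the threshold $k\le \log n/(5\log\log n)$. Your attribution of the constant $5$ to a truncation/second-moment balance is therefore incorrect: on the good set $m(z)\neq 0$ your approach, if made rigorous, would in fact allow much larger $k$, but it is the $m(z)=0$ regime that dictates the rate, and there you have not supplied an argument.

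Two smaller points. First, the passage from a.e.\ pointwise convergence of $U_{\mu_n^{(k)}}$ to weak convergence of $\mu_n^{(k)}$ is not a one-liner; the paper invokes the Tao--Vu lemma, which requires tightness of $n^{-2}\int_{B_r}(\log|L_n^{(k)}|)^2\,dm$, and proves this via the Poisson--Jensen formula over a full subsection---so ``straightforward tightness argument'' undersells what is needed. Second, the transfer from $\tilde e_k$ back to $e_k$ is not as short as you suggest: writing $e_k=\sum_{j}e_j(W_B)\,e_{k-j}(W_{[n]\setminus B})$ with $B$ the set of untruncated indices, the terms $j\ge 1$ involve products of variables with $|W_i|>T$ and no a priori moment control, so one cannot simply bound $|e_k-\tilde e_k|$ by the scarcity of $B$ alone.
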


Further, we show that the case of $k = o(n)$ would follow from an anti-concentration conjecture for elementary symmetric polynomials evaluated at i.i.d.\ random variables (see Conjecture \ref{conj:anti-con} and Remark \ref{remark}).

Our proof of Theorem \ref{thm: maintheorem} takes inspiration from the potential-theoretic approach of Kabluchko's proof of the $k=1$ case (as does the work of Byun, Lee and Reddy \cite{byun-lee-reddy}); the key new step is an anti-concentration ingredient. We take a moment to sketch our proof here. 

The starting point is the following classical fact from potential theory: if $f$ is an analytic function not identically equal to zero then 
\begin{equation}\label{eq:laplace_formula0}
	\frac 1 {2\pi} \Delta \log \left| f \right|= \sum_{\zeta\in \C: f(\zeta)=0} \delta_\zeta
\end{equation}
where the Laplacian is interpreted in the distributional sense.

In particular, if we define
\begin{equation}\label{eq:def_Lnk}
	L_n^{(k)}(z):= \frac{P_n^{(k)}(z)}{k! P_n(z)}=\sum_{1\le i_1<i_2<\dots<i_{k}\le n }Y_{i_1}Y_{i_2}\dots Y_{i_{k}},
\end{equation}
where $Y_i:=\frac{1}{z-X_{i}}$ for $i\in [n]$, then we have
\begin{align}\label{eq:laplace_formula_LnK}
	\frac {1} {2\pi n} \Delta \log \left| L_n^{(k)}(z) \right|&= \frac{1}{n}\sum_{z\in \C: P_n^{(k)}(z)=0} \delta_z- \frac{1}{n}\sum_{z\in \C: P_n(z)=0} \delta_z= \frac{n-k}{n}\mu_n^{(k)}- \mu_n^{(0)}.
\end{align}
By the law of large numbers, the measure $ \mu_n^{(0)}$ tends to $\mu$ as $n\to \infty$, and so in order to show $\mu_n^{(k)}$ converges to $\mu$ it will be enough to show that $\frac{1}{n}\log|L_n^{(k)}(z)|$ goes to $0$ in a sufficiently strong sense to guarantee that $\frac{1}{n}\Delta \log|L_n^{(k)}(z)|$  tends to $0$.

Obtaining an upper bound on the magnitude of $\log|L_n^{(k)}(z)|$ involves controlling two different events: when $|L_n^{(k)}(z)|$ is large and when $|L_n^{(k)}(z)|$ very small. The former event will not be too difficult to deal with, but the latter is trickier.  This comes down to an anti-concentration problem for elementary symmetric polynomials evaluated at i.i.d.\ random variables.  

\begin{lemma}\label{lem:anti-concentration_multiVar}
	Let $Y_1, Y_2, \dots, $ be i.i.d.\ copies of a complex-valued non-degenerate random variable. Then for $1\leq k \leq \frac{\log n}{5\log\log n}$ and each $\eps>0$ we have 
	\begin{align*}
		\lim\limits_{n\rightarrow \infty}\P\left(\left|\sum_{1\le i_1<i_2<\dots<i_k\le n }Y_{i_1}Y_{i_2}\dots Y_{i_k}\right|\le e^{-\eps n}\right)=0.
	\end{align*}
	
\end{lemma}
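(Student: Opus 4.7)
The plan is to prove Lemma~\ref{lem:anti-concentration_multiVar} by a case split on the mean $\mu := \E Y$, preceded by a truncation to reduce to bounded $Y$. Since $Y$ is non-degenerate, fix $r_0 > 0$ and $\alpha \in (0,1)$ with $\sup_{x \in \C} \Pr(|Y - x| \le r_0) \le 1 - \alpha$; if $Y$ lacks moments, first replace each $Y_i$ by $Y_i \ind\{|Y_i| \le T(n)\}$ with $T(n) \to \infty$ chosen so that $\Pr(|Y| > T(n)) = o(1/n)$, so that the truncation event holds with probability $1 - o(1)$ and the truncated variable is bounded and non-degenerate. The underlying guiding identity, used throughout, is the recursion $e_k(Y_1, \ldots, Y_n) = Y_n \cdot e_{k-1}(Y_1, \ldots, Y_{n-1}) + e_k(Y_1, \ldots, Y_{n-1})$.

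When $|\mu|$ is not too small, a Chebyshev argument suffices: expanding $\E|e_k|^2$ over pairs of index subsets shows $\Var(e_k)/|\E e_k|^2 = O(k/n)$ for $k = o(n)$, so $|e_k| \ge \tfrac{1}{2}\binom{n}{k}|\mu|^k$ with probability $1-o(1)$, which dwarfs $e^{-\eps n}$. When $\mu = 0$, the recursion shows that $M_m := e_k(Y_1, \ldots, Y_m)$ is a martingale for $m \ge k$ with increments $D_m = Y_m \cdot e_{k-1}(Y_1, \ldots, Y_{m-1})$ and expected quadratic variation $\sigma^{2k} \binom{n}{k}$ where $\sigma^2 = \E|Y|^2$. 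A martingale Berry--Esseen bound (e.g.\ Hall--Heyde) then gives that $M_n$ is approximately complex Gaussian with this variance, yielding
\[
\Pr\bigl(|e_k(Y_1,\ldots,Y_n)| \le e^{-\eps n}\bigr) \le C\cdot \frac{e^{-\eps n}}{\sigma^k \binom{n}{k}^{1/2}} + (\text{BE error}) \to 0.
\]
(An interpolation handles the case of intermediate $\mu$ --- in particular, the case where truncation shifts $\E Y$ slightly away from $0$ --- by applying the non-zero-mean argument once $|\mu|$ exceeds an appropriate threshold.)

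The main obstacle will be verifying the martingale CLT hypotheses --- concentration of the quadratic variation $\langle M\rangle_n$ around its expectation, and control of $\sum_m \E|D_m|^3$ --- with an explicit dependence on $k$. These are handled inductively in $k$ using hypercontractivity-type moment bounds for elementary symmetric polynomials in i.i.d.\ non-degenerate complex variables (essentially $\E|e_{k-1}|^p \le C^{k-1}(\E|e_{k-1}|^2)^{p/2}$ for $p$ a small integer). The constraint $k \le \log n/(5\log\log n)$ enters precisely at this step: the Berry--Esseen error behaves roughly like $C^k k^{3/2}/n^{1/2}$, and in the stated range $C^k k^{3/2} = n^{o(1)}$, so the error remains $o(1)$. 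Beyond this threshold the $k$-fold accumulation of combinatorial factors swamps the Gaussian signal, and a fundamentally different argument would be required.
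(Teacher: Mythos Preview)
Your approach has a genuine gap in the mean-zero case. When $\E Y = 0$ and $k \ge 2$, the polynomial $e_k(Y_1,\ldots,Y_n)$ is a \emph{completely degenerate} $U$-statistic and is \emph{not} asymptotically Gaussian. Already for $k=2$ one has $e_2 = \tfrac12\bigl((\sum_i Y_i)^2 - \sum_i Y_i^2\bigr)$, so $e_2/n \to \tfrac12\sigma^2(Z^2-1)$ in distribution with $Z$ standard normal --- a shifted chi-square, not a Gaussian. From the martingale side, the increment is $D_m = Y_m\, e_{k-1}(Y_1,\ldots,Y_{m-1})$ and the conditional quadratic variation is $\sigma^2\sum_m |e_{k-1}(Y_1,\ldots,Y_{m-1})|^2$; for $k=2$ this equals $\sigma^2\sum_m S_{m-1}^2$ with $S_m=\sum_{i\le m}Y_i$, and by Donsker $\tfrac{2}{n^2}\sum_m S_{m-1}^2 \Rightarrow \sigma^2\int_0^1 W(t)^2\,dt$, which is genuinely random. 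So the central hypothesis of any martingale Berry--Esseen bound --- concentration of $\langle M\rangle_n$ around a deterministic quantity --- fails, and your displayed small-ball estimate is not justified. The correct limit for fixed $k$ lives in the $k$th Wiener chaos; extracting small-ball probabilities uniformly in growing $k$ from such limits is itself a polynomial anti-concentration problem of Carbery--Wright / Meka--Nguyen--Vu type, which is exactly what one is trying to prove.

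There are secondary issues as well. The ``interpolation'' for intermediate $\mu$ is never made precise, and after truncation the mean $\mu_n$ may drift toward $0$ at an uncontrolled rate for heavy-tailed $Y$, so $\binom{n}{k}|\mu_n|^k$ need not dominate $e^{-\eps n}$; simultaneously the truncation level $T(n)$ required for $\P(|Y|>T(n))=o(1/n)$ can be enormous, making $\sigma_n^2\le T(n)^2$ swamp the Chebyshev ratio. The paper avoids all moment and mean considerations by a different reduction: from each $Y_j$ it extracts one Bernoulli bit $\eta_j=\ind\{\Re Y_j>t\}$ (possible by non-degeneracy), rewrites $e_k$ as a degree-$k$ multi-affine polynomial in the $\eta_j$ with random coefficients, shows that with high probability either the real or imaginary part of this polynomial has rank $\gtrsim q^{2k}n/k$, and then applies the Meka--Nguyen--Vu Littlewood--Offord theorem for high-rank multilinear forms in biased Bernoullis. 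The threshold $k\le \log n/(5\log\log n)$ comes directly from the $\tilde r^{-1/(4k+1)}$ dependence in that black box, not from any CLT error.
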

Here we think of $Y_i=(z-X_i)^{-1}$ as above.   We will deduce Lemma \ref{lem:anti-concentration_multiVar} from a theorem of Meka, Nguyen and Vu \cite{Meka-Ng-Vu2016} concerning anti-concentration of multi-affine polynomials of Bernoulli random variables.  

After showing pointwise bounds on $\log|L_n^{(k)}(z)|$, we upgrade these bounds by showing that they hold in some uniform sense. To take care of this, we again follow Kabluchko and lean on a lemma from Tao and Vu \cite{tao_vu} stating that tightness of an $L^2$-norm will be sufficient to deduce convergence in probability of the measures. For this, we use the Poisson-Jensen formula to relate values of $\log|L_n^{(k)}(z)|$ in a disk to its values on the boundary of a larger disk, and show uniform bounds at the origin and on the boundary of this disk.
\subsection{Notation}
Throughout, we use $B_r(z)$ to be the disk of radius $r$ centered at $z$ and abbreviate $B_r(0)$ as $B_r$. We write $m$ for the standard ($2$-real-dimensional) Lebesgue measure on $\C$. We write $\log z=\log_{+}z-\log_{-}z$ where
\[ \log_{-} z = \begin{cases} 
	|\log z|, & 0 \leq z \leq 1, \\
	0, & z \geq 1, 
\end{cases}
\quad\text {and }\quad
\log_{+} z = \begin{cases} 
	0, & 0 \leq z \leq 1, \\
	\log z, & z \geq 1,
\end{cases} \]
with the convention $\log_{-}0=+\infty$.

\section{Proof of the main result}
We will show two main Lemmas. First we show pointwise convergence of $\frac{1}{n}\log|L_n^{(k)}(z)|$ almost everywhere:
\begin{lemma}\label{lem:Lnk_toprobab_0}
	There is a set $F \subset \C$ with $m(F) = 0$ so that for every $z\in \C\backslash F$ and all sequences $k \leq \frac{\log n}{5 \log \log n}$ we have 
	\begin{equation}\label{eq:log_Lnk_toprobab_0}
		\frac{1}{n}\log |L_n^{(k)}(z)|\toprobab 0.
	\end{equation}
	
\end{lemma}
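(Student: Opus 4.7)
The plan is to prove the pointwise convergence in probability by establishing matching upper and lower bounds for $\frac{1}{n}\log|L_n^{(k)}(z)|$, each valid with high probability, at every $z$ outside a single Lebesgue-null set. A natural choice is
\[
F := \bigl\{z \in \C : \limsup_{s \to 0^+} \mu(B_s(z))/s^2 = +\infty\bigr\};
\]
the Hardy--Littlewood maximal inequality applied to the finite measure $\mu$ gives $m(F)=0$, and every atom of $\mu$ lies in $F$ automatically.

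For the upper bound, fix $z\notin F$ and choose $s_0=s_0(z)>0$, $C=C(z)<\infty$ with $\mu(B_s(z))\le Cs^2$ for $s\le s_0$. A union bound yields $\P(\min_i|z-X_i|<1/n)\le C/n$ once $1/n\le s_0$, so with probability $1-O(1/n)$ every $|Y_i|=|z-X_i|^{-1}$ is at most $n$. Since $L_n^{(k)}(z)$ is the elementary symmetric polynomial $e_k(Y_1,\dots,Y_n)$, on this event
\[
|L_n^{(k)}(z)| \le \binom{n}{k}\max_{|S|=k}\prod_{i\in S}|Y_i| \le \binom{n}{k}n^k \le n^{2k},
\]
and hence $\frac{1}{n}\log|L_n^{(k)}(z)| \le 2k\log n/n$. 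Under the assumption $k\le \log n/(5\log\log n)$ this is $O\bigl((\log n)^2/(n\log\log n)\bigr)=o(1)$, which gives the upper half of the claim.

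For the lower bound I would invoke Lemma \ref{lem:anti-concentration_multiVar} directly. We may assume $\mu$ is not a single Dirac mass, since otherwise $L_n^{(k)}$ is a deterministic nonzero function and the conclusion is immediate. For any $z\notin F$ we have $\mu(\{z\})=0$, so the random variables $Y_i=1/(z-X_i)$ are well-defined i.i.d.\ non-degenerate complex variables (non-degeneracy of $X_i$ passes to $Y_i$ since the map $x\mapsto 1/(z-x)$ is injective). The lemma then gives $\P(|L_n^{(k)}(z)|\le e^{-\eps n})\to 0$ for every $\eps>0$, i.e.\ $\frac{1}{n}\log|L_n^{(k)}(z)|\ge -\eps$ with high probability. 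Combining with the upper bound and sending $\eps\to 0$ gives the convergence in probability.

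The only substantive step is the lower bound, and the heavy lifting there has already been packaged into Lemma \ref{lem:anti-concentration_multiVar}; once that is granted the present lemma reduces to choosing $F$ large enough for a union bound to close in the upper estimate. Note that the exceptional set $F$ does not depend on $k$, so the same $F$ handles every admissible sequence $k=k(n)$ simultaneously.
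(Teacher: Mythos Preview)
Your argument is correct and follows the same overall skeleton as the paper's proof: pick a Lebesgue-null exceptional set $F$, prove the upper tail bound for $z\notin F$, and then invoke Lemma~\ref{lem:anti-concentration_multiVar} for the lower tail. The lower-bound half is identical to the paper's.

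The one genuine difference is in the choice of $F$ and the corresponding upper-bound mechanism. The paper takes $F=\{z:\int_{\C}|y-z|^{-1}\,d\mu(y)=\infty\}$, so that $\E|Y_1|<\infty$ off $F$, and then a single Markov inequality gives $\P(|L_n^{(k)}(z)|\ge e^{\eps n})\le e^{-\eps n}\binom{n}{k}(\E|Y_1|)^k\to 0$ whenever $k=o(n)$. You instead take the (larger, but still null) set where the upper local density of $\mu$ is infinite, use a union bound to force $\max_i|Y_i|\le n$ with probability $1-O(1/n)$, and then bound $|L_n^{(k)}|$ deterministically by $n^{2k}$. Your route gives a sharper quantitative statement (an explicit $o(1)$ upper bound with polynomial failure probability) at the cost of a stronger pointwise hypothesis on $z$; the paper's Markov argument is shorter and uses only $k=o(n)$, which dovetails with their later remark that the whole proof needs nothing beyond $k=o(n)$ plus the anti-concentration lemma. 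Either approach is perfectly adequate here.
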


In order to prove Theorem \ref{thm: maintheorem}, we will need to upgrade Lemma \ref{lem:Lnk_toprobab_0} 
from pointwise convergence to a more uniform mode of convergence.  By a lemma of Tao and Vu \cite[Lemma 3.1]{tao_vu}, it will be sufficient to show tightness of a second moment.  

\begin{lemma}\label{lem:tightness}
	For each $r>0$ and $k \leq \frac{\log n}{5 \log \log n}$, the sequence of random variables $$\left(\frac 1 {n^2} \int_{B_r} \left(\log|L_n^{(k)}(z)|\right)^2dm(z)\right)_{n \geq 1}$$ is tight.
\end{lemma}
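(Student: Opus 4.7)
The plan is to show that $\E\!\left[\tfrac{1}{n^2}\int_{B_r}(\log|L_n^{(k)}|)^2\,dm\right]$ is bounded by a constant depending only on $r$; tightness then follows from Markov's inequality. I split $(\log|L_n^{(k)}|)^2 \leq 2(\log_+|L_n^{(k)}|)^2 + 2(\log_-|L_n^{(k)}|)^2$ and treat the two pieces separately.

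For the positive part, the deterministic bound $|L_n^{(k)}(z)| \leq \binom{n}{k}\max_i |z-X_i|^{-k}$ yields $\log_+|L_n^{(k)}(z)| \leq \log\binom{n}{k} + k\max_i \log_-|z-X_i|$. Squaring, estimating $(\max_i a_i)^2 \leq \sum_i a_i^2$, integrating over $B_r$, and invoking the uniform bound $\int_{B_r}(\log_-|z-w|)^2\,dm(z) \leq \int_{B_1}(\log|u|)^2\,dm(u) =: C_0$, I obtain
\[
\int_{B_r}(\log_+|L_n^{(k)}|)^2 \,dm \leq 2\pi r^2 \left(\log\binom{n}{k}\right)^2 + 2k^2 n C_0,
\]
which, divided by $n^2$, tends to $0$ in the regime $k \leq \log n /(5\log\log n)$.

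For the negative part, I apply the Poisson-Jensen formula to the meromorphic function $L_n^{(k)}$ on a disk $B_R$ with $R > r$:
\[
\log|L_n^{(k)}(z)| = H(z) + \sum_{X_i \in B_R} g_R(z,X_i) - \sum_{Z_j^{(k)} \in B_R} g_R(z, Z_j^{(k)}),
\]
where $H$ is the Poisson integral of the boundary values on $\partial B_R$, $Z_1^{(k)},\dots,Z_{n-k}^{(k)}$ are the roots of $P_n^{(k)}$, and $g_R(z,w)=\log\bigl|\tfrac{R^2-\bar w z}{R(z-w)}\bigr|\ge 0$ for $|z|,|w|<R$. Since the pole contribution is nonnegative, $\log_-|L_n^{(k)}(z)| \leq |H(z)| + C(z)$ where $C(z)=\sum_{Z_j^{(k)}\in B_R} g_R(z,Z_j^{(k)})$. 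Expanding $C^2$ and using Cauchy-Schwarz together with the uniform estimate $\int_{B_r}g_R(z,w)^2 dm \leq C_1(r,R)$ for $w\in B_R$ gives the deterministic bound $\int_{B_r}C^2\,dm \leq C_1 n^2$. Meanwhile, Cauchy-Schwarz applied to the Poisson integral yields $\int_{B_r}H^2\,dm \leq C_2(r,R)\int_{\partial B_R}(\log|L_n^{(k)}(\zeta)|)^2\,d\sigma(\zeta)$.

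The remaining and main obstacle is to bound $\E\!\left[\int_{\partial B_R}(\log|L_n^{(k)}(\zeta)|)^2\,d\sigma\right]$ by $O(n^2)$ for a suitable $R$. I would pick $R$ in a short interval so that, with high probability, $\partial B_R$ avoids $1/n^2$-neighborhoods of every $X_i$ and every $Z_j^{(k)}$; by a pigeonhole argument a good $R$ exists in any interval of length $>4/n$. For such an $R$, the positive part on $\partial B_R$ is again controlled by the same $\binom{n}{k}\max_i|Y_i|^k$ estimate, while for the negative part the natural approach is to use the identity $\log|L_n^{(k)}(\zeta)| - \log|L_n^{(k)}(0)| = \sum_j\log|1-\zeta/Z_j^{(k)}| - \sum_i\log|1-\zeta/X_i|$, invoking Lemma \ref{lem:Lnk_toprobab_0} at the origin (or another anchor point outside the exceptional set $F$) to control the anchor value, and exploiting the fact that the correction terms vanish at infinity. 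Carrying out this cancellation uniformly in $\zeta$, or alternatively promoting Lemma \ref{lem:anti-concentration_multiVar} to a quantitative tail estimate $\P(|L_n^{(k)}(\zeta)| \leq e^{-t}) \leq \psi(t)$ with $\int_0^\infty t\psi(t)\,dt = O(n^2)$ for a.e.\ $\zeta$, is the technical heart of the argument.
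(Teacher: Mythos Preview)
Your proposal has a genuine gap at exactly the point you flag as the ``main obstacle.'' Your stated plan is to bound the expectation $\E\bigl[\tfrac{1}{n^2}\int_{B_r}(\log|L_n^{(k)}|)^2\,dm\bigr]$, but this cannot be achieved with the available anti-concentration input: Lemma~\ref{lem:anti-concentration_multiVar} (and hence Lemma~\ref{lem:Lnk_toprobab_0}) gives only $\P(|L_n^{(k)}(z)|\le e^{-\eps n})=o(1)$ with no quantitative rate, so there is no control whatsoever on $\E\bigl[(\log_-|L_n^{(k)}(0)|)^2\bigr]$ or on $\E\bigl[\int_{\partial B_R}(\log_-|L_n^{(k)}|)^2\,d\sigma\bigr]$. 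Your second alternative, ``promoting Lemma~\ref{lem:anti-concentration_multiVar} to a quantitative tail estimate,'' would close the gap but is a genuinely stronger statement than anything proved in the paper. Your first alternative (random $R$ via pigeonhole plus the anchor identity at $0$) is not carried out, and even if the correction sums could be handled it would still require an $L^2$ bound on $\log|L_n^{(k)}(0)|$ at the anchor, which is again unavailable.

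The paper circumvents this by proving tightness directly rather than through a moment bound. The Poisson--Jensen decomposition $\log|L_n^{(k)}(z)|=I_n^{(k)}(z;R)+(\text{zeros})-(\text{poles})$ is the same, and the zero/pole sums are handled exactly as you do (deterministic $O(n^2)$ after squaring and integrating). For the harmonic piece $I_n^{(k)}(z;R)$, however, the paper does \emph{not} pass to a boundary $L^2$ norm. Instead it shows that both $\tfrac1n\sup_{z\in B_r}I_n^{(k)}(z;R)$ and $\tfrac1n\inf_{z\in B_r}I_n^{(k)}(z;R)$ are tight. The upper bound uses $I_n^{(k)}(z;R)\le \tfrac{M}{2\pi}\int_0^{2\pi}\log_+|L_n^{(k)}(Re^{i\theta})|\,d\theta$ and Jensen's inequality to control $\E\int\log_+$. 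The crucial trick for the lower bound is the inequality (coming purely from bounded distortion of the Poisson kernel)
\[
\frac{2\pi}{n}\, I_n^{(k)}(z;R)\;\ge\; \frac{2\pi M}{n}\, I_n^{(k)}(0;R)\;-\;\Bigl(M-\tfrac1M\Bigr)\,\frac1n\int_0^{2\pi}\log_+\bigl|L_n^{(k)}(Re^{i\theta})\bigr|\,d\theta\,,
\]
so that $\int\log_-$ on the boundary never appears on its own: it is traded for the single value $I_n^{(k)}(0;R)$, which by Poisson--Jensen at $0$ equals $\log|L_n^{(k)}(0)|$ plus terms handled by the strong law of large numbers. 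Now the anti-concentration lemma at the single point $0$ --- only in probability --- yields $\tfrac1n I_n^{(k)}(0;R)\ge -C_1$ with probability $1-o(1)$, and that is precisely enough for tightness. This reduction of the boundary $\log_-$ problem to a single interior point is the idea your sketch is missing.
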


We now deduce Theorem \ref{thm: maintheorem} from Lemmas \ref{lem:Lnk_toprobab_0} and \ref{lem:tightness}.

\begin{proof}[Proof of Theorem \ref{thm: maintheorem}]
It is sufficient to show that for every smooth compactly supported function $\varphi:\C \to \R$ we have
\begin{align*}
	\frac{1}{n}\sum\limits_{z\in \C: P_n^{(k)}(z)=0}\varphi(z)\to \int\limits_{\C}\varphi(z)\,d\mu(z)
\end{align*}
in probability as $n\to \infty$ (see, e.g., \cite[Theorem 14.16]{kallenberg}). By, e.g., \cite[Section 2.4.1]{HKPV}, we have
\begin{align}\label{eq:Laplacian-distribution-formula}
\frac{1}{n}\int_{\C} (\log |L_n^{(k)}(z)|)\Delta \varphi(z)\, dm(z)=  \frac{1}{n}\sum_{z\in \C: P_n^{(k)}(z)=0} \varphi(z)- \frac{1}{n}\sum_{z\in \C: P_n(z)=0} \varphi(z).
\end{align}

Combining Lemmas \ref{lem:tightness} and \ref{lem:Lnk_toprobab_0} with \cite[Lemma~3.1]{tao_vu} shows 	\begin{equation}\label{eq:lem_int_log_Lnk}
	\frac{1}{n}\int_{\C} (\log |L_n^{(k)}(z)|) \Delta \varphi(z) dm(z) \toprobab 0\,.
\end{equation}
By the law of large numbers, the right-most term of \eqref{eq:Laplacian-distribution-formula} tends to $\int\limits_{\C}\varphi(z)d\mu(z)$ almost-surely, completing the proof.
\end{proof}

\subsection{Proof of Lemma \ref{lem:Lnk_toprobab_0}}

\begin{lemma}\label{lem:Ai-zeroset}
		Let $F=\{z\in \C: \int_{\C}|y-z|^{-1}d\mu(y)=\infty\}$. Then $m(F) = 0$.
\end{lemma}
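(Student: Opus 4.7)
The plan is to show that $F$ has measure zero by showing that $m(F\cap B_R) = 0$ for every $R>0$, via Tonelli's theorem combined with a uniform integrability bound of the kernel $|y-z|^{-1}$ in one variable.

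Fix $R>0$. I would consider the iterated integral
\begin{equation*}
	I_R := \int_{B_R} \int_{\C} |y-z|^{-1}\, d\mu(y)\, dm(z)\,.
\end{equation*}
Since the integrand is nonnegative, Tonelli's theorem allows me to swap the order of integration and write
\begin{equation*}
	I_R = \int_{\C} \Bigl(\int_{B_R} |y-z|^{-1}\, dm(z)\Bigr) d\mu(y)\,.
\end{equation*}
The goal then becomes bounding the inner integral uniformly in $y\in\C$.

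For the uniform bound, I would use the standard fact (a symmetrization / rearrangement observation) that for any disk $B_R(a)$ of radius $R$, one has
\begin{equation*}
	\int_{B_R(a)} |w|^{-1}\,dm(w)\ \leq\ \int_{B_R(0)} |w|^{-1}\,dm(w) = 2\pi R\,,
\end{equation*}
since the radially decreasing function $|w|^{-1}$ maximizes its integral over a disk of fixed area when the disk is centered at the origin. Applying this with the change of variables $w = z - y$ (which turns $B_R$ into the disk $B_R(-y)$) yields $\int_{B_R} |y-z|^{-1}\,dm(z)\leq 2\pi R$ uniformly in $y$.

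Plugging this uniform bound into the expression for $I_R$ gives $I_R \leq 2\pi R \cdot \mu(\C) = 2\pi R < \infty$. In particular, the inner integrand $\int_\C |y-z|^{-1}\,d\mu(y)$ is finite for $m$-a.e.\ $z\in B_R$, so $m(F\cap B_R)=0$. Taking $R\to\infty$ along integers gives $m(F)=0$. No part of this is really a genuine obstacle; the only small subtlety is the uniform bound on $\int_{B_R} |y-z|^{-1}\,dm(z)$, which one could also prove directly via polar coordinates centered at $y$ by noting that $B_R \subset B_{R+|y|}(y)$ would give a bound depending on $|y|$, so the rearrangement argument (or equivalently an explicit case analysis $|y|\leq 2R$ vs $|y|>2R$) is the clean route.
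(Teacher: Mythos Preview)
Your proof is correct and follows essentially the same approach as the paper: use Tonelli to swap the order of integration and exploit the local integrability of $|w|^{-1}$ in $\C$. The paper truncates in the $y$-variable instead (observing $\int_{|y-z|\ge 1}|y-z|^{-1}\,d\mu(y)\le 1$ and then computing $\int_\C\int_{B_1(z)}|y-z|^{-1}\,d\mu(y)\,dm(z)=2\pi$), which sidesteps the rearrangement step you use, but the two arguments are otherwise interchangeable.
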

\begin{proof}
	Since $\mu$ is a probability measure we have that $\int\limits_{y:|y-z|\ge 1}|z-y|^{-1}d\mu(y)\le 1$ for all $z$.  Thus $F$ is equal to the set of $z$ for which $\int_{B_1(z)} |y - z|^{-1}\,d\mu(y) = \infty$.  Apply Fubini's theorem to compute
	\begin{align*}
	\int\limits_{\C}\int\limits_{B_1(z)}|z-y|^{-1}d\mu(y)dm(z)=	 \int\limits_{\C} \int\limits_{B_1(y)} |z-y|^{-1}dm(z)d\mu(y)=2\pi.
	\end{align*}
Thus $m(F)=0$.
\end{proof}

\begin{proof}[Proof of Lemma \ref{lem:Lnk_toprobab_0}]
	If the random variable $X_1$ is almost surely a constant, then the Lemma follows easily, and so we assume that $X_1$ is non-degenerate.
	
	First, we show that for every fixed $z\in \C\setminus F$  and every $\eps>0$, we have
	\begin{equation}\label{eq:Ln_upper_bound}
		\lim_{n\to\infty} \P\left[\frac{1}{n} \log |L_n^{(k)}(z)|\geq \eps\right]=0.
	\end{equation}
Indeed, defining $Y_i = (z - X_i)^{-1}$ for all $i \in [n]$, Markov's inequality bounds
\begin{align}\label{eq:Ln-Markov}
	\P\left[ |L_n^{(k)}(z)|\ge e^{\eps n}\right]\le e^{-\eps n} \E[|L_n^{(k)}(z)|] \leq e^{-\eps n} \binom{n}{k} \left(\E |Y_1|\right)^k\,.
\end{align}
Since $z \in \C \setminus F$, the expectation $\E |Y_1|$ is finite.  Thus, the right-hand-side tends to zero as long as $k = o(n)$ and so the limit \eqref{eq:Ln_upper_bound} follows.

	An application of Lemma \ref{lem:anti-concentration_multiVar} will provide a lower bound, i.e.,  we prove that for every $z\in \C$ which is not an atom of $\mu$ and every $\varepsilon>0$,
	\begin{equation}\label{eqn:Lnk_lower_bound}
		\lim_{n\to\infty} \P\left[\frac{1}{n} \log |L_n^{(k)}(z)|\leq -\eps\right]=0.
	\end{equation}

Write
\begin{align*}
\P\left[\frac{1}{n} \log |L_n^{(k)}(z)|\leq -\eps\right] =   \P\left(\left|\sum_{1\le i_1<i_2<\dots<i_k\le n }Y_{i_1}Y_{i_2}\dots Y_{i_k}\right|\le e^{-\eps n}\right)
\end{align*}
and note that Lemma \ref{lem:anti-concentration_multiVar} shows the right-hand-side is $o(1)$, completing the proof of the Lemma.
\end{proof}

\subsection{Proof of Lemma \ref{lem:tightness}}

To prove tightness, we will use the Poisson-Jensen formula to write $\log|L_n^{(k)}(z)|$ in terms of a Poisson integral of $\log|L_n^{(k)}(z)|$ along the circle of radius $R$ for some $R>r$, plus a small correction in terms of the zeros and poles of $L_n^{(k)}(z)$. From here, showing tightness of the $L^2$ norm will come in two steps: $\frac{1}{n}\log|L_n^{(k)}(z)|$ is tight at $0$ and on the circle of radius $R$, and that the correction depending on the zeros and poles is not too large.

Note that by the proof of Lemma \ref{lem:Ai-zeroset} we have that for any $R > 0$ we have that $$\int_{B_R} \int_{\C} |z - y|^{-1} d\mu(y) \,dm(z) < \infty\,.$$
By switching the outer integral into polar coordinates, we see that for Lebesgue-almost-all $R > 0$ we have that \begin{equation}\label{eq:R-circle-finite}
	\int_{0}^{2\pi} \int_{\C} \frac{1}{|R e^{i\theta} - y|}\,d\mu(y)\,d\theta < \infty\,.
\end{equation}
Throughout, we assume that $R$ satisfies this assumption.

We now write the Poisson-Jensen formula for $\log|L_n^{(k)}(z)|$: Let $R>r$ be chosen as above, let $x_{1,n},\ldots, x_{j_n,n}$ denote the zeros of $P_n(z)$ in the disk $B_R$, and let $y_{1,n},\ldots, y_{\ell_n, n}$  be the zeros of $P_n^{(k)}(z)$ in the disk $B_R$. Also note that $j_n\le n$ and $\ell_n\leq n$. We take the following standard facts about the Poisson-Jensen formula from \cite[Chapter 8]{markushevich}:

\begin{lemma}[Poisson-Jensen formula]
\begin{equation}\label{eq:poisson_jensen}
	\log |L_n^{(k)}(z)|
	=
	I_n^{(k)}(z;R)
	+\sum_{\ell=1}^{\ell_n} \log \left|\frac{R(z-y_{\ell,n})}{R^2-\bar y_{\ell,n}z}\right|
	-\sum_{j=1}^{j_n} \log \left|\frac{R(z-x_{j,n})}{R^2-\bar x_{j,n}z}\right|,
\end{equation}
where 
\begin{equation}\label{eq:poisson_integral}
	I_n^{(k)}(z;R)=\frac 1 {2\pi} \int_{0}^{2\pi} \log |L_n^{(k)}(R e^{i\theta})|P_R(|z|, \theta-\arg z)d\theta
\end{equation}
where $P_R$ denotes the Poisson kernel
\begin{equation}\label{eqn:P_Ker}
	P_R(r, \phi)=\frac{R^2-r^2}{R^2+r^2-2Rr \cos \phi},
	\;\;\;
	r\in[0,R],\, \phi\in[0,2\pi].
\end{equation}
\end{lemma}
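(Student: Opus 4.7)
The plan is to reduce the formula \eqref{eq:poisson_jensen} to the Poisson integral representation of harmonic functions by canceling the zeros and poles of $L_n^{(k)}$ inside $B_R$ with Möbius--Blaschke factors. For $|a|<R$ set
\begin{equation*}
  B_a(z):=\frac{R(z-a)}{R^2-\bar{a}\,z};
\end{equation*}
this is analytic on $\overline{B_R}$ with a single zero at $a$ (its unique pole $R^2/\bar{a}$ lies outside $\overline{B_R}$), and a direct computation using the identity $|R-\bar{a}\,e^{i\theta}|=|Re^{i\theta}-a|$ shows that $|B_a(z)|=1$ on the circle $|z|=R$.

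Recall that $L_n^{(k)}=P_n^{(k)}/(k!\,P_n)$ is a rational function whose only zeros inside $B_R$ are the $y_{\ell,n}$ and whose only poles inside $B_R$ are the $x_{j,n}$. After possibly replacing $R$ by a nearby radius in the full-measure set from \eqref{eq:R-circle-finite} (only finitely many radii are ruled out), I may also assume that $\partial B_R$ avoids the zeros of $P_n$ and $P_n^{(k)}$. Then the auxiliary function
\begin{equation*}
  h(z):=L_n^{(k)}(z)\cdot\frac{\prod_{j=1}^{j_n}B_{x_{j,n}}(z)}{\prod_{\ell=1}^{\ell_n}B_{y_{\ell,n}}(z)}
\end{equation*}
is by construction analytic and nonvanishing on $\overline{B_R}$, so $\log|h|$ is harmonic on $B_R$ and continuous up to the boundary. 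The Poisson integral representation for harmonic functions on the disk then yields
\begin{equation*}
  \log|h(z)|=\frac{1}{2\pi}\int_{0}^{2\pi}\log|h(Re^{i\theta})|\,P_R(|z|,\theta-\arg z)\,d\theta.
\end{equation*}

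Since each Blaschke factor has modulus one on $\partial B_R$, we get $|h(Re^{i\theta})|=|L_n^{(k)}(Re^{i\theta})|$, so the right-hand side above is exactly $I_n^{(k)}(z;R)$. Taking logarithms in the defining identity for $h$ and rearranging then produces
\begin{equation*}
  \log|L_n^{(k)}(z)|=I_n^{(k)}(z;R)+\sum_{\ell=1}^{\ell_n}\log|B_{y_{\ell,n}}(z)|-\sum_{j=1}^{j_n}\log|B_{x_{j,n}}(z)|,
\end{equation*}
which is precisely \eqref{eq:poisson_jensen}.

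The only step requiring genuine care is checking that $\log|L_n^{(k)}(Re^{i\theta})|$ is integrable against $P_R$, which is where condition \eqref{eq:R-circle-finite} enters: it forces $\int_0^{2\pi}\log_{-}|Re^{i\theta}-X_i|\,d\theta<\infty$ for each $i$, and $\log|L_n^{(k)}(Re^{i\theta})|$ can be written as a finite combination of such $\log|\,\cdot\,|$ terms together with an $L^1$-integrable $\log_{+}$ piece coming from the elementary symmetric polynomial expansion of $L_n^{(k)}$. Once this integrability is established, the argument proceeds routinely, and the main obstacle in a fully rigorous write-up is really just this bookkeeping for the boundary integrand.
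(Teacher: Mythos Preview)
Your argument is the standard Blaschke-product proof of the Poisson--Jensen formula and is correct. The paper itself does not prove this lemma at all: it simply cites it as a standard fact from \cite[Chapter~8]{markushevich}. So there is nothing to compare against beyond noting that you have supplied the classical proof that the reference would give.

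One minor correction to your final paragraph: the integrability of $\log|L_n^{(k)}(Re^{i\theta})|$ does not require condition~\eqref{eq:R-circle-finite}. Once you have arranged (as you did) that $\partial B_R$ avoids the finitely many zeros and poles of the rational function $L_n^{(k)}$, the function $\theta\mapsto\log|L_n^{(k)}(Re^{i\theta})|$ is continuous on $[0,2\pi]$ and hence trivially integrable. Condition~\eqref{eq:R-circle-finite} is used later in the paper (Lemmas~\ref{step1:UpperBound_I} and~\ref{Step2:Ink0R_lower_bound}) to obtain bounds on the Poisson integral that are \emph{uniform in $n$ and $k$}; it plays no role in the Poisson--Jensen identity itself, which is a purely deterministic statement about a single meromorphic function.
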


The only property of the Poisson kernel we need is that it provides only bounded distortion when $r$ is uniformly bounded away from $R$.  
\begin{fact}\label{fact:P_R-bound}
	For all $0 < r < R$ there exists $M \geq 1$ so that for all $z \in B_r$ and $\theta \in [0,2\pi]$ we have 	\begin{equation}\label{eqn: P_R-bound}
		\frac{1}{M} \leq P_R(|z|,\theta) \leq M\,.  
	\end{equation}
\end{fact}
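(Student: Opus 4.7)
The plan is a direct computation based on a single observation: the denominator in \eqref{eqn:P_Ker} is the squared distance from $r$ to $Re^{i\phi}$, that is,
\[
R^2 + r^2 - 2Rr\cos\phi = |Re^{i\phi} - r|^2.
\]
Setting $r = |z|$ and $\phi = \theta$, the denominator of $P_R(|z|,\theta)$ equals $|Re^{i\theta} - |z||^2$. I will then bound it above and below by the triangle inequality,
\[
(R - |z|)^2 \;\leq\; |Re^{i\theta} - |z||^2 \;\leq\; (R + |z|)^2,
\]
and factor the numerator as $R^2 - |z|^2 = (R-|z|)(R+|z|)$. Combining these two ingredients immediately yields
\[
\frac{R - |z|}{R + |z|} \;\leq\; P_R(|z|, \theta) \;\leq\; \frac{R + |z|}{R - |z|}.
\]

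To finish, I note that the map $t \mapsto (R+t)/(R-t)$ is monotone increasing on $[0, r]$ (since $r < R$ keeps the denominator bounded away from zero), so both bounds above are controlled by the single quantity $(R+r)/(R-r)$. Setting $M := (R+r)/(R-r)$ gives $M \geq 1$ and produces \eqref{eqn: P_R-bound} uniformly for $z \in B_r$ and $\theta \in [0,2\pi]$. A one-line alternative would be to invoke compactness: $P_R$ is continuous on the compact rectangle $[0,r] \times [0,2\pi]$ and its denominator is bounded below by $(R-r)^2 > 0$ there, so $P_R$ is strictly positive and attains a positive minimum and a finite maximum, yielding the same conclusion with a non-explicit $M$.

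There is no serious obstacle: the content of the fact is just the standard observation that the Poisson kernel is well-behaved as long as one stays uniformly inside the circle of integration, and the bound is sharp (up to constants) with the explicit value $M = (R+r)/(R-r)$.
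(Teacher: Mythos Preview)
Your proof is correct and is the standard explicit argument for this elementary fact. The paper itself does not prove Fact~\ref{fact:P_R-bound}; it is simply stated as a known property of the Poisson kernel, so there is nothing to compare against beyond noting that your derivation (yielding the explicit constant $M=(R+r)/(R-r)$) is exactly what one would expect.
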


We first provide an upper bound $ I_n^{(k)}(z;R)$ in probability that is uniform in $B_r$ for all $k$ and $n$.

\begin{lemma}\label{step1:UpperBound_I}
For all $R$ satisfying \eqref{eq:R-circle-finite} and $0 < r < R$ there is a constant $C > 0$ so that for all $t > 0$ and $1 \leq k \leq n$ we have 
\begin{equation}
	\P\left(\frac{1}{n}\sup_{z \in B_r} I_n^{(k)}(z;R) \geq t \right)  \leq \frac{C}{t}\,.
\end{equation}
\end{lemma}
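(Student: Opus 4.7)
The plan is to bound $\E\sup_{z\in B_r}I_n^{(k)}(z;R)\leq C'n$ uniformly in $k\in[1,n]$ and then apply Markov's inequality. Since the Poisson kernel is non-negative, dropping the $\log_-$ part of $\log|L_n^{(k)}|$ yields
\begin{equation*}
  I_n^{(k)}(z;R)\;\leq\;\frac{1}{2\pi}\int_0^{2\pi}\log_+|L_n^{(k)}(Re^{i\theta})|\,P_R(|z|,\theta-\arg z)\,d\theta,
\end{equation*}
and by Fact \ref{fact:P_R-bound} we have $P_R(|z|,\cdot)\leq M$ for $z\in B_r$, so
\begin{equation*}
  \sup_{z\in B_r}I_n^{(k)}(z;R)\;\leq\;\frac{M}{2\pi}\int_0^{2\pi}\log_+|L_n^{(k)}(Re^{i\theta})|\,d\theta.
\end{equation*}
It therefore suffices to show the expectation of the right-hand side is $O(n)$ uniformly in $k\leq n$.

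I would estimate the integrand pointwise via Maclaurin's inequality. With $Y_i = (Re^{i\theta}-X_i)^{-1}$, the triangle inequality in \eqref{eq:def_Lnk} gives $|L_n^{(k)}(Re^{i\theta})|\leq e_k(|Y_1|,\dots,|Y_n|)$, and Maclaurin's inequality yields
\begin{equation*}
  e_k(|Y_1|,\dots,|Y_n|)\;\leq\;\binom{n}{k}\left(\frac{|Y_1|+\cdots+|Y_n|}{n}\right)^k.
\end{equation*}
Applying $\log_+(ab)\leq\log_+ a+\log_+ b$ and $\log_+(x^k)=k\log_+ x$, and then the elementary bound $\log_+ x\leq x$, leads after taking expectations to
\begin{equation*}
  \E\log_+|L_n^{(k)}(Re^{i\theta})|\;\leq\;\log\binom{n}{k}+k\cdot\E|Re^{i\theta}-X_1|^{-1}.
\end{equation*}

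Finally, integrating in $\theta$ by Fubini, and using the crude bounds $\log\binom{n}{k}\leq n\log 2$ and $k\leq n$,
\begin{equation*}
  \E\int_0^{2\pi}\log_+|L_n^{(k)}(Re^{i\theta})|\,d\theta\;\leq\;2\pi n\log 2+n\int_0^{2\pi}\E|Re^{i\theta}-X_1|^{-1}\,d\theta.
\end{equation*}
The last integral is finite by \eqref{eq:R-circle-finite}, so the whole expression is bounded by $C'n$ with $C'$ depending only on $R$ and $\mu$; Markov's inequality applied to $\frac{1}{n}\sup_{z\in B_r}I_n^{(k)}(z;R)$ then produces the claimed estimate with $C=MC'/(2\pi)$. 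The only place where uniformity in $k$ could have been lost is in the combinatorial prefactor $\binom{n}{k}$, and the main observation making the proof go through for every $k\in[1,n]$ is that this factor is merely exponential in $n$, so its logarithm is absorbed by the $O(n)$ budget.
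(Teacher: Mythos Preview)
Your proof is correct and follows essentially the same approach as the paper: bound $I_n^{(k)}(z;R)$ above by $\frac{M}{2\pi}\int_0^{2\pi}\log_+|L_n^{(k)}(Re^{i\theta})|\,d\theta$ via the Poisson kernel estimate, show that the expectation of this nonnegative quantity is $O(n)$ uniformly in $k$, and apply Markov. The only difference is cosmetic: to control $\E\log_+|L_n^{(k)}|$, the paper writes $\log_+ x\le \log(1+\binom{n}{k}^{-1}x)+\log\binom{n}{k}$ and applies Jensen's inequality to the concave function $x\mapsto\log(1+x)$, whereas you bound $|L_n^{(k)}|$ pointwise by $\binom{n}{k}\bigl(\tfrac1n\sum_i|Y_i|\bigr)^k$ via Maclaurin and then use $\log_+ x\le x$. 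Both routes land on the same estimate $\E\log_+|L_n^{(k)}(Re^{i\theta})|\le \log\binom{n}{k}+k\,\E|Re^{i\theta}-X_1|^{-1}$, after which the argument is identical.
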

\begin{proof}
	Apply Fact \ref{fact:P_R-bound} and bound
	\begin{align}\label{eqn:bound-Ink}
			I_n^{(k)}(z;R) 
			&\leq \frac{M}{2 \pi} \int_0^{2\pi} \log_{+} \left| L_n^{(k)}(R e^{i\theta})\right| d\theta.
		\end{align}
	Bound \begin{align}
		\frac{1}{n}\log_{+} \left| L_n^{(k)}(R e^{i\theta})\right| &\leq \frac{1}{n}\log\left(1 +  |L_n^{(k)}(R e^{i\theta})| \right) \nonumber \\
		&\leq \frac{1}{n}\log\left(1 +  \binom{n}{k}^{-1}|L_n^{(k)}(R e^{i\theta})| \right) + 1 \label{eq:log+-bound}
	\end{align}
where  we used that $n^{-1} \log \binom{n}{k} \leq 1$. 

\begin{claim}\label{cl:Jensen}
	We have \begin{equation*}
		\int_{0}^{2\pi} \E  \frac{1}{n}\log\left(1 +  \binom{n}{k}^{-1}|L_n^{(k)}(R e^{i\theta})| \right)\,d\theta \leq \int_0^{2\pi} \int_{\C} |R e^{i\theta} - y|^{-1}\,d\mu(y)\,d\theta\,.
	\end{equation*}
\end{claim}
\begin{proof}[Proof of Claim \ref{cl:Jensen}]
	 Noting that the function $x \mapsto \log(1 + x)$ is concave for $x \geq 0$, Jensen's inequality bounds \begin{align*}
		\E \log\left(1 +  \binom{n}{k}^{-1}|L_n^{(k)}(R e^{i\theta})| \right) &\leq \log \left(1 +  \E\binom{n}{k}^{-1}|L_n^{(k)}(R e^{i\theta})|  \right)\,. 
	\end{align*}
	
	Since $R$ satisfies \eqref{eq:R-circle-finite}, for almost all $\theta \in [0,2\pi]$ we may bound $$\E\binom{n}{k}^{-1}|L_n^{(k)}(R e^{i\theta})| \leq \left( \int_{\C} \left|R e^{i\theta} - y \right|^{-1}\,d\mu(y) \right)^{k}\,.$$
	
	Combining the previous two displayed equations and integrating over $\theta$ shows that \begin{align*}\int_0^{2\pi}\frac{1}{n}\E\log\left(1 +  \binom{n}{k}^{-1}|L_n^{(k)}(R e^{i\theta})| \right)\,d\theta  &\leq \int_0^{2\pi} \frac{1}{n}\log\left(1 + \left( \int_{\C} \left|R e^{i\theta} - y \right|^{-1}\,d\mu(y) \right)^{k} \right)\,d\theta \\
		&\leq \int_0^{2\pi} \frac{k}{n} \int_{\C} |R e^{i\theta} - y|^{-1}\,d\mu(y)\,d\theta \\
		&\leq  \int_0^{2\pi}  \int_{\C} |R e^{i\theta} - y|^{-1}\,d\mu(y)\,d\theta 
	\end{align*}
	where we used the elementary inequality $\log(1 + x^k) \leq k x$ for $x \geq 0$.  
\end{proof}
Combining lines \eqref{eqn:bound-Ink} and \eqref{eq:log+-bound} with the Claim and Markov's inequality completes the proof.
\end{proof}

The Poisson-Jensen form will allow us to compare $I_n^{(k)}(z;R)$ to $I_n^{(k)}(0;R)$.  It will be convenient to assume that $0 \notin F$.  If $0\in F$, we may find some $a\notin F$ (recall that $F$ has measure $0$) and replace each $X_i$ with $X_i-a$; thus, we may assume without loss of generality that $0\notin F$. 

\begin{lemma} \label{Step2:Ink0R_lower_bound}
There is a constant $C_1$ depending only on $R>0$ such that for  $k \leq \frac{\log n}{5\log\log n}$ we have
\begin{equation}
	\lim_{n\to\infty} \P\left[\frac{1}{n}I_n^{(k)}(0;R)\leq  -C_1\right]=0.
\end{equation}

\end{lemma}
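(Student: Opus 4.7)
The plan is to evaluate the Poisson--Jensen formula \eqref{eq:poisson_jensen} at $z = 0$. Since $P_R(0,\phi) = 1$, rearranging yields
\[
I_n^{(k)}(0;R) \;=\; \log|L_n^{(k)}(0)| \;-\; \sum_{\ell=1}^{\ell_n}\log(|y_{\ell,n}|/R) \;+\; \sum_{j=1}^{j_n}\log(|x_{j,n}|/R).
\]
Every zero $y_{\ell,n}$ lies in $B_R$, so the middle sum is non-positive; dropping it gives the lower bound
\[
\frac{1}{n}I_n^{(k)}(0;R) \;\geq\; \frac{1}{n}\log|L_n^{(k)}(0)| \;+\; \frac{1}{n}\sum_{j=1}^{j_n}\log(|x_{j,n}|/R).
\]
It then suffices to show that each of these two terms is bounded below in probability by a constant depending only on $R$.

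For the first term, since we have reduced to the case $0 \notin F$, Lemma \ref{lem:Lnk_toprobab_0} applied at $z = 0$ gives $\frac{1}{n}\log|L_n^{(k)}(0)| \toprobab 0$, which is certainly bounded below by $-1$ with probability tending to $1$.

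For the second term, the zeros $x_{j,n}$ of $P_n$ in $B_R$ are exactly the $X_i$ with $|X_i|\leq R$, so
\[
\frac{1}{n}\sum_{j=1}^{j_n}\log(|x_{j,n}|/R) \;=\; \frac{1}{n}\sum_{i=1}^{n}\log(|X_i|/R)\,\ind_{|X_i|\leq R}.
\]
The strong law of large numbers yields almost-sure convergence to $\E\!\left[\log(|X_1|/R)\,\ind_{|X_1|\leq R}\right]$, once we verify the integrand lies in $L^1$. This is where the assumption $0\notin F$ is used: by the definition of $F$ we have $\int_{\C}|y|^{-1}\,d\mu(y)<\infty$, and the elementary bound $|\log(R/|y|)| \leq R/|y|$ for $0<|y|\leq R$ shows the integrand is dominated by $R|X_1|^{-1}$, which is integrable. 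Choosing $C_1$ strictly larger than $1 + \bigl|\E[\log(|X_1|/R)\,\ind_{|X_1|\leq R}]\bigr|$ then yields the desired conclusion.

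The only delicate step is verifying the integrability of $\log(|X_1|/R)\,\ind_{|X_1|\leq R}$, which is immediate from the comparison $|\log(R/|y|)|\leq R/|y|$ together with $0\notin F$. Everything else is routine: at $z=0$ the Poisson--Jensen identity collapses to classical Jensen's formula, and the two probabilistic inputs --- the pointwise statement of Lemma \ref{lem:Lnk_toprobab_0} at the origin, and the SLLN for a bounded-in-$L^1$ i.i.d.\ sum --- are already at our disposal.
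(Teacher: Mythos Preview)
Your proof is correct and follows essentially the same approach as the paper's: both evaluate the Poisson--Jensen formula at $z=0$, discard the (nonnegative) contribution from the zeros $y_{\ell,n}$ of $P_n^{(k)}$, invoke the anti-concentration lower bound at $z=0$ (you cite Lemma~\ref{lem:Lnk_toprobab_0}, the paper cites \eqref{eqn:Lnk_lower_bound} directly), and apply the SLLN to the $X_j$-sum after checking $\E\log_-|X_1/R|<\infty$ via $0\notin F$. Your integrability argument using $\log(R/|y|)\le R/|y|$ is a minor variant of the paper's observation that $\log_-|z/R|-\log_-|z|$ is bounded with compact support.
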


\begin{proof}
	By the Poisson--Jensen formula~\eqref{eq:poisson_integral} at $z=0$, we have
	\begin{equation}\label{eqn:poisson_jensen_at0}
		I_n^{(k)}(0;R)
		=
		\log |L_n^{(k)}(0)|
		-	 \sum_{\ell=1}^{\ell_n} \log \left|\frac{y_{\ell,n}}{R}\right|
		+	 \sum_{j=1}^{j_n} \log \left|\frac{x_{j,n}}{R}\right|.
	\end{equation}
	For the first term on the right-hand side of~\eqref{eqn:poisson_jensen_at0}, by \eqref{eqn:Lnk_lower_bound},
	we have
	\begin{equation}\label{eq:term1-Poisson}
		\lim_{n\to\infty}\P\left[\left(\frac{1}{n} \log |L_n^{(k)}(0)|\leq -1\right)\right]=0.
	\end{equation}
	For the second term on the right-hand side of~\eqref{eqn:poisson_jensen_at0}, we get a trivial bound
	\begin{equation}\label{eq:termY-poisson}
		\frac{1}{n}\sum_{\ell=1}^{\ell_n} \log \left|\frac{y_{\ell,n}}{R}\right| \leq 0.
	\end{equation}
	For the last term on the right-hand side of~\eqref{eqn:poisson_jensen_at0}, using the strong law of large numbers, we see
	\begin{equation}\label{eq:LLN_for_log_zeros}
		\frac{1}{n} \sum_{j=1}^{j_n} \log \left|\frac{x_{j,n}}{R}\right| = -\frac{1}{n} \sum_{j=1}^{n} \log_{-} \left|\frac{X_{j}}{R}\right| \toas -\E \left[\log_- \left|\frac{X_1}{R}\right|\right].
	\end{equation}
	Observe that $z\mapsto \log_-|z/R|-\log_-|z|$ is a bounded function with compact support and $\E \log_-|X_1| <\infty$ by the assumption $0\notin F$, so we have $\E \left[\log_- \left|\frac{X_1}{R}\right|\right]<\infty$.  Combining  \eqref{eqn:poisson_jensen_at0}, \eqref{eq:term1-Poisson}, \eqref{eq:termY-poisson} and \eqref{eq:LLN_for_log_zeros} completes the proof.
\end{proof}

To translate Lemma \ref{Step2:Ink0R_lower_bound} to a uniform lower bound for $z\in B_r$, we will use the fact that the Poisson kernel has bounded distortion, and so it is sufficient to consider the value at $0$ and the boundary. 

 \begin{lemma}\label{Step3: nkzR_lower_bound}
For all $0 < r < R$ there is a constant $C$ such that for all $n$, $t > 0$ and $k \leq \frac{\log n}{5 \log \log n}$ we have 
\begin{equation}
	\P\left[\frac{1}{n}\inf_{z\in B_r} I_n^{(k)}(z;R) \leq -t \right] \leq \frac{C}{t} .
\end{equation}
 \end{lemma}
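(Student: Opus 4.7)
My plan is to extend the pointwise lower bound on $\frac{1}{n}I_n^{(k)}(0;R)$ furnished by Lemma~\ref{Step2:Ink0R_lower_bound} to a uniform lower bound over $z \in B_r$, by exploiting the bounded distortion of the Poisson kernel (Fact~\ref{fact:P_R-bound}), and then repackaging the result as a Markov-style $C/t$ tail bound.

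First I would derive a deterministic comparison between $I_n^{(k)}(z;R)$ and $I_n^{(k)}(0;R)$. For $z \in B_r$, Fact~\ref{fact:P_R-bound} gives $|P_R(|z|, \theta-\arg z) - 1|\le M-1$. Writing
\[
I_n^{(k)}(z;R) - I_n^{(k)}(0;R) = \frac{1}{2\pi}\int_0^{2\pi}\log|L_n^{(k)}(Re^{i\theta})|\,\bigl[P_R(|z|,\theta-\arg z) - 1\bigr]\,d\theta,
\]
splitting $\log = \log_+ - \log_-$, and using the identity $I_n^{(k)}(0;R) = U_n - V_n$ (where $U_n := \frac{1}{2\pi}\int_0^{2\pi} \log_+ |L_n^{(k)}(Re^{i\theta})|\,d\theta$ and $V_n$ is defined analogously with $\log_-$), I obtain the deterministic inequality
\[
\inf_{z \in B_r} I_n^{(k)}(z;R) \;\ge\; M\, I_n^{(k)}(0;R) - 2(M-1)\,U_n.
\]

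A union bound then gives
\[
\P\!\left[\frac{1}{n}\inf_{z\in B_r} I_n^{(k)}(z;R) \le -t\right] \le \P\!\left[\frac{1}{n}I_n^{(k)}(0;R) \le -\frac{t}{2M}\right] + \P\!\left[\frac{U_n}{n} \ge \frac{t}{4(M-1)}\right].
\]
The proof of Lemma~\ref{step1:UpperBound_I} (via Claim~\ref{cl:Jensen}) already yields $\E[U_n/n]\le C_0$ for some $C_0 = C_0(R,\mu)$, so the second probability is $\le 4(M-1)C_0/t$ by Markov. For the first probability, I would apply the Poisson--Jensen identity \eqref{eqn:poisson_jensen_at0} at the origin: since $-\sum_\ell \log|y_{\ell,n}/R|\ge 0$ and $\sum_j \log|x_{j,n}/R| = -\sum_{j=1}^n \log_-|X_j/R|$, one has $I_n^{(k)}(0;R) \ge \log|L_n^{(k)}(0)| - \sum_{j=1}^n \log_-|X_j/R|$. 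A further union bound together with Markov applied to the empirical mean (using that $\E \log_-|X_1/R|<\infty$ because $0\notin F$) reduces matters to an estimate of the form $\P[\, |L_n^{(k)}(0)|\le e^{-ns/2}\,] \le C/s$ uniform in $n$ and $s$.

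The main obstacle will be this last anti-concentration estimate. Lemma~\ref{lem:anti-concentration_multiVar} only provides qualitative $o(1)$ control for each fixed $\varepsilon$, whereas a $C/s$ rate is needed. My plan is to revisit the proof of Lemma~\ref{lem:anti-concentration_multiVar}: the Meka--Nguyen--Vu machinery should in fact supply a quantitative L\'evy concentration for $L_n^{(k)}(0)$ of polynomial-in-$n$ type (e.g.\ $\P[|L_n^{(k)}(0)|\le \delta]\le n^{O(k)}\delta^2$). A standard layer-cake integration of such a bound would then give $\E[\log_-|L_n^{(k)}(0)|/n] = O(k\log n/n) = o(1)$ uniformly in $n$, after which one final Markov step closes the argument.
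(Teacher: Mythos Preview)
Your deterministic comparison inequality and the Markov bound on $U_n$ are exactly what the paper does; the paper's inequality \eqref{eq:I_nz-LB} is $I_n^{(k)}(z;R)\ge M I_n^{(k)}(0;R)-(M-1/M)U_n$, which is the same as yours up to the harmless replacement of $M-1/M$ by $2(M-1)$.

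The divergence comes at the end. The paper does \emph{not} attempt to prove a quantitative $C/s$ tail for $\frac{1}{n}\log|L_n^{(k)}(0)|$. It simply invokes Lemma~\ref{Step2:Ink0R_lower_bound} as a black box, asserting that $\frac{M}{n}I_n^{(k)}(0;R)\ge -C_1$ asymptotically almost surely, combines this with the Markov bound $\P[U_n/n\ge t]\le C'/t$, and declares the proof complete. Strictly speaking this yields tightness of $\frac{1}{n}\inf_{z\in B_r}I_n^{(k)}(z;R)$ from below (for large $t$ the bound is $o_n(1)+C'/t$, for small $t$ it is $\le 1\le C''/t$, and finitely many $n$ are handled individually), which is all that is used downstream in Lemma~\ref{lem:tightness}. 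So your observation that Lemma~\ref{lem:anti-concentration_multiVar} gives only qualitative control is accurate, but the paper is content with that.

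Your proposed remedy, however, would not work. The Meka--Nguyen--Vu theorem is a Littlewood--Offord-type statement: it bounds $\sup_{x}\P\bigl(Q\in[x,x+1]\bigr)$ by a quantity depending on $n$ and $k$ but \emph{not} on the length of the interval. It therefore cannot produce a small-ball estimate of the form $\P[|L_n^{(k)}(0)|\le\delta]\le n^{O(k)}\delta^{2}$; the bound you get is the same for $\delta=1/2$ as for $\delta=e^{-n}$. The layer-cake step you outline would thus yield nothing. In short: your plan up to and including the identification of the issue is correct and mirrors the paper, but the final quantitative anti-concentration step is both unnecessary for the application and not obtainable from the cited tool.
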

 \begin{proof}
 	By Fact \ref{fact:P_R-bound} we may lower bound
 	\begin{align}
 		\frac {2\pi} n I_n^{(k)}(z;R)
 		&=\frac{1}{n}\int_{0}^{2\pi} \log |L_n^{(k)}(R e^{i\theta})|P_R(|z|, \theta-\arg z)d\theta \nonumber\\
 		&=\frac{1}{n}	\int_{0}^{2\pi} \log_{+} |L_n^{(k)}(R e^{i\theta})|P_R(|z|, \theta-\arg z)d\theta-\frac{1}{n}\int_{0}^{2\pi} \log_{-} |L_n^{(k)}(R e^{i\theta})|P_R(|z|, \theta-\arg z)d\theta \nonumber \\
 		&\ge \frac{1}{Mn} \int_0^{2\pi}\log_{+} |L_n^{(k)}(R e^{i\theta})| d\theta- \frac{M}{n}\int_0^{2\pi} \log_{-} |L_n^{(k)}(R e^{i\theta})| d\theta \nonumber \\
 		&=
 		\frac {2\pi M} n I_n^{(k)}(0;R)-\frac{1}{n}  \left(M-\frac {1}{M}\right) \int_0^{2\pi}  \log_{+} |L_n^{(k)}(R e^{i\theta})|d\theta. \label{eq:I_nz-LB}
 	\end{align}

By Lemma \ref{Step2:Ink0R_lower_bound}, we have that $\frac {2\pi M} n I_n^{(k)}(0;R) \geq -C$ asymptotically almost surely.  By the proof of Lemma \ref{step1:UpperBound_I} we have $$\P\left( \frac{1}{n}\int_0^{2\pi}  \log_{+} |L_n^{(k)}(R e^{i\theta})|d\theta \geq t \right) \leq \frac{C}{t}\,.$$

Combining these two bounds with \eqref{eq:I_nz-LB} completes the Lemma.
 \end{proof}

\begin{proof}[Proof of Lemma \ref{lem:tightness}]
Applying the Cauchy-Schwarz inequality to \eqref{eq:poisson_jensen} and dividing both sides by $n^2$, we have
\begin{align} \label{eq:poisson_jensen_sq_ineq}
	\frac 1{n^2}\log^2 |L_n^{(k)}(z)|
	\leq
	\frac{3}{n^2} I_n^{(k)}(z;R)^2
	+\frac{3 \ell_n}{n^2}\sum_{\ell=1}^{\ell_n} \log^2 \left|\frac{R(z-y_{\ell,n})}{R^2-\bar y_{\ell,n}z}\right|
	+\frac{3 j_n}{n^2} \sum_{j=1}^{j_n} \log^2 \left|\frac{R(z-x_{j,n})}{R^2-\bar x_{j,n}z}\right|.
\end{align}
Note that for any $y\in B_R$ and $z\in B_r$ we have $|R^2-\bar{y}z|$ is uniformly bounded below. Using integrability of $(\log|z|
)^2$ near $0$, this implies that
\begin{equation*}
	\sup_{y\in B_R} \int\limits_{B_r} \log^2 \left|\frac{R(z-y)}{R^2-\bar y z}\right|dm(z) + \sup_{x\in B_R} \int\limits_{B_r} \log^2 \left|\frac{R(z-x)}{R^2-\bar x z}\right|dm(z)  \le C
\end{equation*}
for some constant $C$. Bounding $j_n\le n$ and $\ell_n\leq n$ shows a deterministic bound of 
\begin{equation*}
	\frac{3 \ell_n}{n^2}\sum_{\ell=1}^{\ell_n} \int\limits_{B_r} \log^2 \left|\frac{R(z-y_{\ell,n})}{R^2-\bar y_{\ell,n}z}\right|dm(z)
	+\frac{3 j_n}{n^2} \sum_{j=1}^{j_n} \int\limits_{B_r} \log^2 \left|\frac{R(z-x_{j,n})}{R^2-\bar x_{j,n}z}\right|dm(z)
	\leq 3C.
\end{equation*}
Lemma \ref{step1:UpperBound_I}  and \ref{Step3: nkzR_lower_bound} show that $\frac{1}{n^2}\int\limits_{B_r}(I_n^{(k)}(z;R))^2 dm(z)$ is tight. Since a tight sequence plus a deterministically bounded sequence is tight, the proof is complete.
\end{proof}

\section{Anticoncentration of elementary symmetric polynomials}
We will deduce Lemma \ref{lem:anti-concentration_multiVar} from a Theorem of Meka, Nguyen and Vu concerning anti-concentration of multi-affine polynomials of Bernoulli random variables.  To properly state their theorem, we will need a bit of setup.  

We will consider multi-affine polynomials of the form \begin{equation}\label{eq:form-poly}
	Q(z_1,\ldots,z_n) = \sum_{S \subset [n]} a_S \prod_{j \in S} z_j
\end{equation}
where the coefficients $a_S$ are real and the \emph{degree} of $Q$ is the largest  $|S|$ so that $a_{|S|} \neq 0$.  If $Q$ is of degree $d$, then the \emph{rank} of $Q$ is the largest integer $r$ such that there exist disjoint sets $S_1,\ldots,S_r \subset [n]$ of size $d$ with $|a_{S_j}| \geq 1$ for all $j \in [r]$.

\begin{theorem}[Theorem 1.7 of \cite{Meka-Ng-Vu2016}]\label{th:littlewoodoffordbiasedmain}
	There is an absolute constant $B$ such that the following holds.
	Let $Q$ be a polynomial of the form \eqref{eq:form-poly} whose rank $r \ge 2$. Let $p$ be such that $\tilde r:= 2^{d}\alpha^{d}r=2^{d}\alpha^{d}n/k\ge 3$ where $\alpha := \min\{p, 1-p\}$. Then for any interval $I$ of length 1, if we let $\{\eps_j\}_{j = 1}^n$ be i.i.d.\ Bernoulli$(p)$ random variables we have
	$$\P( Q(\eps_1,\ldots,\eps_n) \in I) \leq \frac{B d^{4/3}(\log \tilde r )^{1/2}}{(\tilde r )^{1/(4d+1)}}\,.$$
\end{theorem}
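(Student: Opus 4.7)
The plan is to establish Theorem \ref{th:littlewoodoffordbiasedmain} via a two-stage reduction: first from the biased Bernoulli$(p)$ setting to the symmetric $p=1/2$ setting through a coupling argument, and then from a degree-$d$ multi-affine polynomial to a linear one through decoupling, at which point the classical Littlewood--Offord inequality closes the argument.

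For the first stage I would represent each Bernoulli$(p)$ variable $\eps_j$ as a two-stage mixture: flip an auxiliary coin $\eta_j\sim$~Bernoulli$(2\alpha)$, and conditional on $\eta_j=1$ draw $\eps_j$ from a symmetric distribution on two values chosen so the overall marginal is Bernoulli$(p)$, while on $\eta_j=0$ set $\eps_j$ to the fixed atom that matches the mixture. Conditioning on $\eta$ reveals a random subset $U:=\{j:\eta_j=1\}$ of expected size $2\alpha n$, on which $Q$ restricts to a multi-affine polynomial in i.i.d.\ symmetric variables (the values of the remaining $\eps_j$ are absorbed into coefficients). A rank witness $S_\ell$ survives whenever $S_\ell\subseteq U$, an event of probability $(2\alpha)^d$ that is independent across the disjoint $S_\ell$. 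A Chernoff bound shows that with probability at least $1/2$ at least $2^{d-1}\alpha^d r=\tilde r/2$ of the witnesses survive, so the restricted polynomial has rank at least $\tilde r/2$ and we are reduced to the symmetric case.

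For the decoupling stage, relabel the surviving witnesses as $S_1,\dots,S_{r'}$ with $r'\ge\tilde r/2$, pick a distinguished index $i_\ell\in S_\ell$ for each $\ell$, and set $T:=\{i_1,\dots,i_{r'}\}$; disjointness of the $S_\ell$ forces the $i_\ell$ to be distinct. Conditioning on $(\eps_i)_{i\notin T}$ collapses $Q$ into an affine function $\sum_\ell c_\ell\,\eps_{i_\ell}+c_0$, where $c_\ell$ is a degree-$(d-1)$ multi-affine polynomial in the conditioned variables containing the monomial $a_{S_\ell}\prod_{i\in S_\ell\setminus\{i_\ell\}}\eps_i$ of absolute value at least $1$. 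The classical Littlewood--Offord inequality applied to this conditional linear form gives $\P[Q\in I\mid(\eps_i)_{i\notin T}]\le C/\sqrt{m}$, where $m:=|\{\ell:|c_\ell|\ge 1/2\}|$.

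The main obstacle, and the origin of the exponent $1/(4d+1)$, is showing that $m$ is large with sufficiently high probability. Each $c_\ell$ is itself a multi-affine polynomial of degree $d-1$ whose structure mirrors the hypothesis of the theorem, so the event $|c_\ell|\ge 1/2$ is a lower-degree small-ball question amenable to induction on $d$; at each level I would use a Paley--Zygmund or hypercontractive second-moment estimate to lower bound $\P[|c_\ell|\ge 1/2]$, then apply Chebyshev to show $m$ concentrates near its mean. The nested Cauchy--Schwarz in the hypercontractive bounds prevents preservation of the optimal $(\tilde r)^{-1/2}$ rate: each induction step roughly squares the concentration probability, which after $d$ iterations produces $(\tilde r)^{-1/(4d+1)}$ in place of $(\tilde r)^{-1/2}$, while the factors $d^{4/3}$ and $(\log\tilde r)^{1/2}$ come from a Halász-type refinement of the base linear Littlewood--Offord inequality and from the union bound over the choice of the distinguished elements $i_\ell$. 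Tracking these losses carefully through the induction yields the stated bound.
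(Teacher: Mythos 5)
This statement is not proved in the paper at all: it is imported verbatim as Theorem 1.7 of \cite{Meka-Ng-Vu2016}, so the comparison is with the original Meka--Nguyen--Vu argument. Measured against that, your sketch has a genuine gap already at the ``decoupling'' step. After you condition on $(\eps_i)_{i\notin T}$, the polynomial $Q$ does \emph{not} collapse to an affine function $\sum_\ell c_\ell\,\eps_{i_\ell}+c_0$: the witness monomials indeed meet $T$ in exactly one index, but $Q$ has arbitrary other coefficients $a_S$ in \eqref{eq:form-poly}, and a set $S$ with $a_S\neq 0$ may contain several of the distinguished indices $i_1,\dots,i_{r'}$. The conditional polynomial is therefore still multi-affine of degree up to $d$ in the variables indexed by $T$, and the classical Littlewood--Offord inequality cannot be applied to it. This is precisely the difficulty that forces the Costello--Tao--Vu style decoupling (random splitting of the index set plus Cauchy--Schwarz) in quadratic and higher-degree Littlewood--Offord problems; plain conditioning does not linearize the problem. (A smaller, fixable point: your Chernoff step must give failure probability $e^{-c\tilde r}$, not merely $1/2$, since a constant-probability bad event would swamp the target bound; with independence of the survival events across the disjoint $S_\ell$ this is available.)

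Even granting a correct decoupling, the quantitative engine you propose cannot produce the stated rate. An induction on the degree in which ``each step roughly squares the concentration probability'' yields an exponent that decays geometrically in $d$, i.e.\ a bound of the shape $\tilde r^{-c/2^{d}}$, and no accounting of Hal\'asz-type refinements or union bounds over the choice of $i_\ell$ converts that into the polynomial-in-$d$ exponent $1/(4d+1)$; asserting that the losses ``after $d$ iterations produce $(\tilde r)^{-1/(4d+1)}$'' is exactly the point that is not justified and, with this method, is not true. The proof in \cite{Meka-Ng-Vu2016} runs along a different axis: after the reduction to many disjoint large coefficients, it uses a regularity (critical-index) decomposition to reach a low-influence polynomial, applies the Mossel--O'Donnell--Oleszkiewicz invariance principle to replace the Bernoulli inputs by Gaussians, and then invokes the Carbery--Wright anti-concentration inequality for degree-$d$ Gaussian polynomials; the exponent $1/(4d+1)$ and the factor $d^{4/3}(\log \tilde r)^{1/2}$ arise from balancing the invariance-principle error against the Carbery--Wright bound. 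So to prove the theorem you either need that Gaussian-comparison machinery or must settle for a substantially weaker exponent than the one claimed.
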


To apply this Theorem to the case of Lemma \ref{lem:anti-concentration_multiVar}---where the variables are complex valued and arbitrary---we will use the non-degeneracy to first sample most of the randomness of each $Y_j$, leaving a Bernoulli random variable's worth of randomness behind; then we will take either the real or imaginary part of the subsequent polynomial and show that it has sufficiently high rank with high probability.

\begin{proof}[Proof of Lemma \ref{lem:anti-concentration_multiVar}]
	Define
\begin{equation}
	P (Y_1, \dots, Y_n)  := \sum_{ S \subset \{1, \dots, n\}; |S|=k } \prod_{j \in S} Y_j. \label{form}
\end{equation}

Since $Y_j$ is non-degenerate we have that at least one of $\Re Y_j$ or $\Im Y_j$ is non-degenerate.  By  replacing each $Y_j$ with $i Y_j$ if needed, we assume without loss of generality that $\Re Y_j$ is non-degenerate.  As such, there exist $t \in \R, \kappa > 0, q \in (0,1)$ so that \begin{equation}\label{eq:Y-nondeg}
	\P(\Re Y_j - t > \kappa ) \geq q \quad \text{ and } \quad	\P(\Re Y_j - t < -\kappa ) \geq q \,.
\end{equation}

Note that since $k = O(\log n / \log\log n)$ we may bound $\kappa^k = o(e^{\eps n/2})$, and so by replacing $Y_j$ with $Y_j / \kappa$, we may assume without loss of generality that $\kappa = 1$.  

For each $j=1, \dots, n$, let $Y_j^{+}$ and $Y_j^{-}$ be independent random variables satisfying $\P(Y_j^{+}\in K) = \P(Y_j - t\in K|\Re Y_j  - t>0)$ and $\P(Y_j^{-}\in K) = \P(Y_j - t\in K|\,\Re Y_j - t \le 0)$ for all measurable subsets $K\subset \C$. 
Let $\eta_j=\mathbbm{1}_{\Re Y_j-t\geq 0},\, j\in[n]$, and note that the collection $\{\eta_j\}$ are i.i.d.\ Bernoulli random variables with parameter $p := \P(\Re Y_j - t > 0)$.
Let $Y_j' = \eta_{j}Y_j^{+} + (1-\eta_{j})Y_j^{-} +t$, and observe that $Y_j'$ and $Y_j$ have the same distribution.  Therefore, it suffices show that $$\P\left(|P(Y_1', \dots, Y_n')| \leq 1/2\right) = o(1)\,.$$  

For each fixed instance of the variables $\{Y_j^+, Y_j^-\}$, define the polynomial $F$ in the variables $\{\eta_j\}$ via \begin{align*}
	F(\eta_1,\ldots,\eta_n) &:=  P(\eta_1(Y_1^{+}-Y_1^{-}) + Y_1^{-} + t, \dots, \eta_n(Y_n^{+}-Y_n^{-}) + Y_n^{-} + t) \\
	&= \sum_{S \subset [n], |S| = k} \left(\prod_{j \in S} (Y_j^+ - Y_j^-) \right) \prod_{j \in S} \eta_j + Q
\end{align*}

where $Q$ is a polynomial of degree $<k$ in terms of $\eta_j$ when all the $Y_j^{\pm}$ are fixed. For $r = \lfloor n/k \rfloor$, let $S_1,\ldots,S_r$ be disjoint subsets of $[n]$ of size $k$.  For $S \subset [n] $, define $b_S := \prod_{j \in S}(Y_j^+ - Y_j^-)$.  

\begin{claim} \label{cl:big-b-coeffs}
	With probability at least $1 - 2 \exp(-c q^{2k}n/k)$, there are at least $\frac{q^{2k}n}{2k}$ coefficients $b_{S_{\ell}}$  satisfying $\big|b_{S_{\ell}}\big|\ge 2^{k}$.
\end{claim}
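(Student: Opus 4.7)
The plan is to reduce the claim to a standard Chernoff-type argument applied to independent Bernoulli indicators coming from the disjoint blocks $S_1,\ldots,S_r$.

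First I would establish a single-coordinate lower bound. By definition of $Y_j^+$ and $Y_j^-$ and the non-degeneracy condition \eqref{eq:Y-nondeg} (with $\kappa=1$ after the normalization made just before the claim), one has
\begin{equation*}
\P(\Re Y_j^+ > 1) \;=\; \frac{\P(\Re Y_j-t>1)}{\P(\Re Y_j-t>0)} \;\ge\; q,
\qquad
\P(\Re Y_j^- < -1) \;\ge\; q.
\end{equation*}
Since $Y_j^+$ and $Y_j^-$ are independent, the event $\Re(Y_j^+-Y_j^-) > 2$ occurs with probability at least $q^2$, and on this event $|Y_j^+ - Y_j^-| \ge |\Re(Y_j^+-Y_j^-)| > 2$.

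Next I would combine these single-coordinate bounds over each block. For each $\ell\in[r]$, set
\begin{equation*}
A_\ell \;=\; \mathbbm{1}\!\left[\,|b_{S_\ell}|\ge 2^k\,\right] \;=\; \mathbbm{1}\!\left[\,\prod_{j\in S_\ell}|Y_j^+-Y_j^-|\ge 2^k\,\right].
\end{equation*}
Using independence of $\{(Y_j^+,Y_j^-):j\in S_\ell\}$ within a single block, the previous step gives $\P(A_\ell=1)\ge q^{2k}$. Crucially, because the blocks $S_1,\ldots,S_r$ are \emph{disjoint}, the families $\{(Y_j^+,Y_j^-):j\in S_\ell\}$ across different $\ell$ involve disjoint sets of independent random variables, so the indicators $A_1,\ldots,A_r$ are mutually independent.

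Finally I would apply a multiplicative Chernoff bound to the sum $N=\sum_{\ell=1}^r A_\ell$ of independent Bernoullis stochastically dominating Bernoulli$(q^{2k})$ variables. With $r=\lfloor n/k\rfloor$, one has $\E[N]\ge r\,q^{2k}\ge q^{2k}n/(2k)$ for $n$ large, and Chernoff gives
\begin{equation*}
\P\!\left(N < \tfrac{q^{2k}n}{2k}\right) \;\le\; 2\exp\!\left(-c\,q^{2k}n/k\right)
\end{equation*}
for an absolute constant $c>0$, which is exactly the claimed bound. There is no real obstacle here; the only thing to be careful about is the disjointness of the $S_\ell$'s being used to ensure both the product-over-a-block probability $q^{2k}$ and the independence across blocks that powers the Chernoff estimate.
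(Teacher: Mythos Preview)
Your proposal is correct and follows essentially the same approach as the paper: lower bound $\P(|Y_j^+-Y_j^-|\ge 2)\ge q^2$ via the non-degeneracy condition, use disjointness of the $S_\ell$ to get independent events with probability at least $q^{2k}$, and apply a Chernoff/Bernstein-type tail bound to the count $N$. The only cosmetic difference is that the paper cites Bernstein's inequality in the form $\P(N\le \E N/2)\le 2\exp(-c\,\E N)$ rather than phrasing it as a multiplicative Chernoff bound.
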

\begin{proof}[Proof of Claim \ref{cl:big-b-coeffs}]
	For $\ell \in [r]$, define the event $\cE_\ell = \{|b_{S_\ell}| \geq 2^k\}$, and note that the events $\{\cE_{\ell}\}_{\ell}$ are independent.  Further, note that   \begin{align}
		\P(\cE_\ell)&\ge 	\P(\big| Y_j^{+}-Y_j^{-}\big|\ge 2, \forall j\in S_{\ell})\notag\\
		&\ge \P(\Re  Y_j^{+}-\Re Y_j^{-}\ge 2\kappa , \forall j\in S_{\ell})\notag\\
		&\ge\P(\left(\Re  Y_j^{+}-t\right)-\left(\Re Y_j^{-}-t\right)\ge 2 , \forall j\in S_{\ell})\notag\\
		&\ge q^{2k} 
	\end{align}
by \eqref{eq:Y-nondeg}, since we have assumed $\kappa = 1$.  Let $N$ denote the number of $\ell$ for which $\cE_\ell$ holds.  Then by Bernstein's inequality there is a constant $c > 0$ so that $$\P(N \leq q^{2k} r / 2) \leq \P(N \leq \E N / 2) \leq 2 \exp(-c \E N) \leq 2 \exp\left( -cq^{2k} n/k \right)\,.$$
Noting that $\E N \geq q^{2k} n / k$ completes the claim.
\end{proof}

Note that $$\Re F(\eta_1,\ldots,\eta_n) = \sum_{S\subset [n], |S| = k } \left(\Re b_{S}\right) \prod_{j \in S} \eta_j + \Re Q, \quad \Im F(\eta_1,\ldots,\eta_n) = \sum_{S\subset [n], |S| = k } \left(\Im b_{S}\right) \prod_{j \in S} \eta_j + \Im Q  $$

and so $\Re F$ and $\Im F$ are both multi-affine polynomials in the variables $\{\eta_j\}$ with real coefficients and degree at most $k$.  For each coefficient $|b_{S_\ell}|$ with $|b_{S_\ell}| \geq 2^{k}$ we have either $|\Re b_{S_\ell}| \geq 2^{k-1}$ or $|\Im b_{S_\ell}| \geq 2^{k-1}$.  Thus, if we let $\cA$ be the event that either $\Re F$ or $\Im F$ has rank at least $q^{2k} n /(4k)$, then Claim \ref{cl:big-b-coeffs} implies that $\P(\cA^c) \leq 2 \exp(-c q^{2k}n/k)$.

Thus, for $n$ sufficiently large, we have \begin{align}
	\P(|P(Y_1,\ldots,Y_n)| \leq e^{-\eps n} ) &\leq  \P\left( |\Re F(\eta_1,\ldots,\eta_n)| \leq \frac{1}{2},   |\Im F(\eta_1,\ldots,\eta_n)| \leq \frac{1}{2}  \,\big|\, \cA\right) + 2 e^{-c q^{2k}n/k} \nonumber \\ 
	&=  \P\left( |\Re F(\eta_1,\ldots,\eta_n)| \leq \frac{1}{2},   |\Im F(\eta_1,\ldots,\eta_n)| \leq \frac{1}{2}  \,\big|\, \cA\right) + o(1)
\end{align}
where for the second bound we used that $k = o(\log n)$.

Conditioned on the event $\cA$, suppose that the rank of $\Re F$ is at least $q^{2k}n / (4k)$.  Then we will apply the first bound in Theorem \ref{th:littlewoodoffordbiasedmain}; if we set $\alpha = \min\{p,1-p\}$ and $\widetilde{r} = 2^k \alpha^k q^{2k}n / (4k)$ then Theorem \ref{th:littlewoodoffordbiasedmain} implies \begin{equation}
	\P\left( |\Re F(\eta_1,\ldots,\eta_n)| \leq 1/2 \,\Big|\, \rank \Re F(\eta_1,\ldots,\eta_n) \geq q^{2k}n / (4k)\right) \leq B k^{4/3} \frac{(\log \widetilde{r})^{1/2}}{\widetilde{r}^{1/(4k+1)}}\,.
\end{equation}

Write $k = \eps \log n / \log \log n$, where $\eps \leq 1/5$ and note we first note that $$\log \widetilde{r} = \log n\left(1 + \Theta\left(\frac{\eps}{\log \log n} \right) \right)\,.$$

and so 
\begin{align*}\log \left(B k^{4/3} \frac{(\log \widetilde{r})^{1/2}}{\widetilde{r}^{1/(4k+1)}} \right) &\leq \frac{1}{2} \log \left(\log n\left(1 + \Theta\left(\frac{\eps}{\log \log n}\right) \right) \right)  - \frac{\log \log n}{8\eps} + \Theta\left(\log \log \left( \eps \frac{\log n}{\log \log n} \right)\right) \\
	&=\left(\frac{1}{2}-  \frac{1}{8\eps}\right)\log \log n + o(1) + o(\log \log n)\,.
\end{align*}

For $\eps \leq 1/5$, the right-hand side  tends to negative infinity, completing the proof.
 \end{proof}

\section{Comments and Open Problems}
We believe that Lemma \ref{lem:anti-concentration_multiVar} is suboptimal, and that in fact the same statement should hold provided $k = o(n)$. 

\begin{conjecture}\label{conj:anti-con}
		Let $Y_1, Y_2, \dots, $ be i.i.d.\ copies of a complex-valued non-degenerate random variable. Then for any sequence $k = k(n)$ satisfying $k = o(n)$ and each $\eps>0$ we have 
		\begin{align*}
			\lim_{n\to  \infty}\P\left(\left|\sum_{1\le i_1<i_2<\dots<i_k\le n }Y_{i_1}Y_{i_2}\dots Y_{i_k}\right|\le e^{-\eps n}\right)=0.
		\end{align*}
\end{conjecture}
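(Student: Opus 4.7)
The plan is to follow the two-stage strategy of Lemma~\ref{lem:anti-concentration_multiVar}---Bernoulli reduction followed by an anti-concentration input---while replacing Theorem~\ref{th:littlewoodoffordbiasedmain} by a bound whose dependence on the degree is much weaker. The binding constraint in the proof of Lemma~\ref{lem:anti-concentration_multiVar} is the Meka--Nguyen--Vu exponent $1/(4d+1)$, which becomes trivial once the degree $d$ exceeds $\log \widetilde r$; combined with the per-coordinate losses from the Bernoulli reduction, this is exactly what restricts $k$ to $O(\log n / \log\log n)$.

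First I would refine the Bernoulli reduction. Keep the decomposition $Y_j = \eta_j Y_j^+ + (1-\eta_j) Y_j^- + t$, but choose $t$ as a median of $\Re Y_j$ so that the Bernoulli parameter $p = \tfrac{1}{2}$ and the factor $\alpha^k = 2^{-k}$ in the Meka--Nguyen--Vu bound is as small as possible. To remove the $q^{2k}$ loss in the rank estimate, introduce a truncation level $L$ and let $J \subset [n]$ be the random set of indices $j$ with $\bigl|\log|Y_j^+ - Y_j^-|\bigr| \leq L$; for $L$ large enough (depending only on $\mu$) one has $|J| \geq n/2$ with probability $1 - e^{-\Omega(n)}$. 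Because the conjecture is invariant under $Y \mapsto \lambda Y$ for any $\lambda = e^{o(n/k)}$, a constant rescaling of $Y$ arranges $\E\bigl[\log|Y_1^+ - Y_1^-| \,\big|\, 1 \in J\bigr] \geq 2L$. A Hoeffding bound on $\sum_{j \in S}\log|Y_j^+ - Y_j^-|$ then shows that for each size-$k$ subset $S \subset J$, the coefficient $c_S := \prod_{j\in S}(Y_j^+ - Y_j^-)$ has $|c_S| \geq 2^k$ with probability $1 - e^{-\Omega(k)}$, so that with probability $1 - e^{-\Omega(n)}$ there are $\Omega(n/k)$ disjoint subsets $S_\ell \subset J$ with $|c_{S_\ell}| \geq 2^k$. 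Splitting $F(\eta) = e_k(Y)$ into real and imaginary parts yields rank $\Omega(n/k)$ in at least one of $\Re F, \Im F$.

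The main obstacle is an anti-concentration inequality for multi-linear Bernoulli polynomials $Q(\eta)$ of degree $d$ and rank $\widetilde r$ giving $\P(|Q|\leq 1) = o(1)$ uniformly for $d = o(n)$. The Meka--Nguyen--Vu exponent $1/(4d+1)$ is insufficient beyond $d \asymp \log n$, and improving it to a dimension-free form such as $\widetilde r^{-c}$ for an absolute $c > 0$ is a well-known open problem in additive combinatorics. A promising alternative is to exploit the product structure of $e_k$ through the generating identity $\prod_{j=1}^n (1 + z Y_j) = \sum_{\ell=0}^n z^\ell e_\ell(Y)$: on any contour $|z|=r$, the quantity $\log|\prod_j(1+zY_j)|$ is a sum of i.i.d.\ random variables, whose L\'evy concentration is controlled by classical Kolmogorov--Rogozin inequalities. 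The hard part will be converting this concentration into a lower bound on the individual Fourier coefficient $e_k = (2\pi r^k)^{-1}\int_0^{2\pi}\prod_j(1+re^{i\theta}Y_j)\,e^{-ik\theta}\,d\theta$, because a zero of the analytic function $z \mapsto \prod_j(1+zY_j)$ lying close to the contour $|z|=r$ can destroy all cancellation among the coefficients. I expect one must randomize $r$ over a small interval, exploiting the discreteness of zeros of an analytic function, and choose $r$ near the critical radius where $|e_k|\,r^k$ dominates among the coefficients of the generating polynomial. An inductive scheme via the splitting $e_k(Y_1,\ldots,Y_n) = \sum_{j=0}^{k}e_{k-j}(Y_1,\ldots,Y_{n-m})\,e_j(Y_{n-m+1},\ldots,Y_n)$ with block size $m = m(n,k)$ provides another route, but substitutes the original problem for a Berry--Esseen step controlling cancellation among $k+1$ products, each itself an elementary symmetric polynomial.
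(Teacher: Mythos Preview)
The statement you are attempting is Conjecture~\ref{conj:anti-con}, which appears in the paper's final section on open problems.  The paper gives no proof; on the contrary, it states explicitly that the conjecture ``lies well beyond the currently known bounds'' for anti-concentration of multi-affine polynomials.  There is therefore no paper proof to compare your proposal against.

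Your proposal is not a proof either, and you say as much: it is a research outline that correctly isolates the bottleneck and then lists two possible attacks without carrying either one through.  The refinements you make to the Bernoulli reduction (choosing $t$ to be a median so that $p=\tfrac12$, truncating to control the size of the coefficients $c_S$) are reasonable hygiene, but they affect only the \emph{base} $\widetilde r$ in the Meka--Nguyen--Vu estimate; they do nothing about the exponent $1/(4d+1)$, which is what caps $k$ at order $\log n/\log\log n$.  You then correctly identify that what is really needed is a degree-independent exponent, and you note that this is itself a well-known open problem.  At that point the proposal stops being a proof and becomes a wish list.  The two alternatives you sketch---controlling $e_k$ through the generating product $\prod_j(1+zY_j)$ via a contour integral, and a block-recursive decomposition---are both reasonable starting points, but in each case you flag the decisive step (isolating a single Fourier coefficient from the concentrated product; controlling cancellation among the $k+1$ recursive terms) as unresolved.

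One minor internal inconsistency worth flagging: in your truncation step you define $J$ by $\bigl|\log|Y_j^+ - Y_j^-|\bigr|\le L$ and then claim that a rescaling arranges $\E\bigl[\log|Y_1^+-Y_1^-|\,\big|\,1\in J\bigr]\ge 2L$.  These are incompatible as written, since the truncation already forces the conditional expectation to lie in $[-L,L]$; rescaling shifts both the truncation window and the expectation by the same amount.  This is fixable (one-sided truncation, or decoupling the two thresholds), but in any case the rank improvement it buys is still swamped by the degree-dependent exponent, so it does not move the needle on the conjecture.
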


The sharpest general statements for anti-concentration for multi-affine polynomials are provided by the work of Meka, Nguyen and Vu \cite{Meka-Ng-Vu2016}, for which Conjecture \ref{conj:anti-con} lies well beyond the currently known bounds.  For an example of a polynomial of very high degree for which anti-concentration is known, see Tao and Vu's work on the permanent of a random matrix \cite{tv-permanent} (see also \cite{kwan-sauermann}).

\begin{remark}\label{remark}
	We note that substituting a positive resolution to Conjecture \ref{conj:anti-con} for Lemma \ref{lem:anti-concentration_multiVar} would immediately upgrade Theorem \ref{thm: maintheorem} to hold for all $k = o(n)$; indeed, throughout the proof the only assumptions on $k$ used are that $k = o(n)$ and that the conclusion of Lemma \ref{lem:anti-concentration_multiVar} holds.  Additionally, due to Lemma \ref{lem:Ai-zeroset}, one can further assume that the variable $Y_1$ in Conjecture \ref{conj:anti-con} satisfies $\E |Y_1| < \infty$.  
\end{remark}

We also highlight a conjecture of Kabluchko\footnote{This conjecture was stated at the Workshop on Random Functions in April, 2021.}:
\begin{conjecture}[Kabluchko] \label{conj:AS}
	For any probability measure $\mu$ on $\C$, the sequence of random measures $\mu_n^{(1)}$ converges to $\mu$ almost surely as $n \to \infty$.
\end{conjecture}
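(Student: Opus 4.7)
The plan is to upgrade each step of Kabluchko's potential-theoretic argument from convergence in probability to almost sure convergence. By the identity \eqref{eq:Laplacian-distribution-formula} at $k=1$, together with the fact that $\mu_n^{(0)} \to \mu$ almost surely by the strong law of large numbers, it suffices to show that for every smooth compactly supported $\varphi: \C \to \R$, $\frac{1}{n} \int_\C \log|L_n^{(1)}(z)| \Delta \varphi(z)\,dm(z) \to 0$ almost surely as $n \to \infty$.

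The first step would be to establish pointwise almost sure convergence $\frac{1}{n}\log|L_n^{(1)}(z)| \to 0$ a.s., for Lebesgue almost every $z$. Since $L_n^{(1)}(z) = \sum_{i=1}^n (z-X_i)^{-1}$ is an i.i.d.\ sum, the upper bound is immediate: for $z \notin F$, $\frac{1}{n}\sum_i |z-X_i|^{-1} \to \E|z-X_1|^{-1} < \infty$ a.s., so $\frac{1}{n}\log|L_n^{(1)}(z)| \leq (\log n + O(1))/n \to 0$. The lower bound reduces to showing $|L_n^{(1)}(z)| \geq e^{-\eps n}$ eventually a.s.\ via Borel-Cantelli. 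When $\E[(z-X_1)^{-1}] \neq 0$, the strong law gives $|L_n^{(1)}(z)| \sim n\bigl|\E[(z-X_1)^{-1}]\bigr|$ and there is nothing to prove; when this expectation vanishes (possible on a positive-measure set, e.g.\ in the interior of the support for a radial $\mu$), one invokes a local central limit theorem. When $\mu$ has a non-trivial absolutely continuous component, convolution smoothing produces a density for $L_n^{(1)}(z)$ bounded by $O(1/n)$, hence $\P(|L_n^{(1)}(z)| \leq e^{-\eps n}) \leq C_z e^{-2\eps n}/n$, which is summable. For purely atomic $\mu$, $z \mapsto \E[(z-X_1)^{-1}]$ is meromorphic on the complement of the support and vanishes only on a Lebesgue null set, so the non-vanishing-expectation case covers Lebesgue-a.e.\ $z$.

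The second step would be to convert pointwise almost sure convergence into almost sure convergence of the integral against $\Delta \varphi$ via the Vitali convergence theorem. Fubini translates the pointwise a.s.\ statement into ``almost surely, $\frac{1}{n}\log|L_n^{(1)}(z)| \to 0$ for Lebesgue a.e.\ $z$'', and combined with almost-sure uniform integrability of $\{\frac{1}{n}\log|L_n^{(1)}(z)|\}_{n \geq 1}$ on each $B_r$, this yields convergence of the integral against any bounded compactly supported test function. For uniform integrability it suffices to show an almost-sure $L^2(B_r)$ bound on $\frac{1}{n}\log|L_n^{(1)}(z)|$. The Poisson-Jensen decomposition of Lemma \ref{lem:tightness} reduces this to two parts: (a) the pole- and zero-contributions are already a.s.\ uniformly bounded in $n$; and (b) the Poisson integral $\frac{1}{n}I_n^{(1)}(z;R)$ must have an a.s.\ bounded $L^2(B_r)$ norm, which further reduces to an a.s.\ bound on $\frac{1}{n^2}\int_0^{2\pi}\bigl(\log|L_n^{(1)}(Re^{i\theta})|\bigr)^2\,d\theta$.

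The main obstacle is step (b): upgrading the tightness of the boundary $L^2$ norm in Lemma \ref{lem:tightness} to an almost-sure bound. One promising route is to refine the Markov/Jensen argument of Lemma \ref{step1:UpperBound_I} using a concentration-of-measure estimate together with a $\theta$-integrated version of the Borel-Cantelli anti-concentration estimate from the first step, applied to $\log_-|L_n^{(1)}(Re^{i\theta})|$. An alternative bypass strategy would be to prove $\Var\bigl(\int \varphi\, d\mu_n^{(1)}\bigr) = O(n^{-\alpha})$ for some $\alpha > 1$ directly via an Efron-Stein inequality, giving a.s.\ convergence through Borel-Cantelli without any pointwise detour. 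The latter appears conceptually cleaner but requires quantitative control over how perturbing a single root $X_i$ can move the entire critical-point spectrum of $P_n$, a global polynomial-root perturbation problem that seems to demand genuinely new input beyond the potential-theoretic machinery used so far.
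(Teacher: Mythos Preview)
This statement appears in the paper as an \emph{open conjecture} (Conjecture~\ref{conj:AS}), not as a theorem; the paper offers no proof. The only commentary the authors provide is that ``the main difficulty appears to be in upgrading Lemma~\ref{lem:Lnk_toprobab_0} to almost-sure convergence.'' There is thus no proof in the paper to compare your attempt against.

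Your proposal is itself explicitly a strategy with acknowledged gaps rather than a complete proof, so in that sense you and the paper agree that the problem is open. Two remarks on your outline. First, your treatment of the pointwise step is more optimistic than the paper's: you sketch arguments for the cases ``$\mu$ has a nontrivial absolutely continuous part'' and ``$\mu$ is purely atomic,'' but this dichotomy omits singular continuous measures entirely, and even within your two cases the local-CLT-type density bound $\P(|L_n^{(1)}(z)|\le e^{-\eps n})=O(e^{-2\eps n}/n)$ would require genuine work, since the density of $(z-X)^{-1}$ is not in general bounded. The paper, by contrast, flags exactly this pointwise step as the principal obstruction. Second, your identification of the almost-sure $L^2(B_r)$ bound (upgrading Lemma~\ref{lem:tightness}) as the harder step is a reasonable alternative viewpoint, but neither of the two routes you propose --- a boundary Borel--Cantelli argument or an Efron--Stein variance bound --- is carried far enough to assess. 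In short: the paper contains no proof, and your outline, while a sensible roadmap, does not close the conjecture.
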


We suspect that Theorem \ref{thm: maintheorem} can also be upgraded to almost sure convergence.  The main difficulty appears to be in upgrading Lemma \ref{lem:Lnk_toprobab_0} to almost-sure convergence.

\section*{Acknowledgments}
Both authors supported in part by NSF grant DMS-2137623.

\bibliographystyle{abbrv}
\bibliography{kpoly}
\end{document}